\definecolor{webgreen}{rgb}{0,.5,0}
\definecolor{webbrown}{rgb}{.6,0,0}
\newcommand{\seqnum}[1]{\href{https://oeis.org/#1}{\rm \underline{#1}}}
\begin{document}


\theoremstyle{plain}
\newtheorem{theorem}{Theorem}
\newtheorem{corollary}[theorem]{Corollary}
\newtheorem{lemma}[theorem]{Lemma}
\newtheorem{proposition}[theorem]{Proposition}

\theoremstyle{definition}
\newtheorem{definition}[theorem]{Definition}
\newtheorem{example}[theorem]{Example}
\newtheorem{conjecture}[theorem]{Conjecture}

\theoremstyle{remark}
\newtheorem{remark}[theorem]{Remark}

\begin{center}
\vskip 1cm{\LARGE\bf 
Linear $k$-Chord Diagrams
}
\vskip 1cm 
\large
Donovan Young\\
St Albans, Hertfordshire AL1 4SZ \\
United Kingdom\\
\href{mailto:donovan.m.young@gmail.com}{\tt donovan.m.young@gmail.com} \\
\end{center}

\vskip .2 in

\begin{abstract}
We generalize the notion of linear chord diagrams to the case of
matched sets of size $k$, which we call $k$-chord diagrams. We provide
formal generating functions and recurrence relations enumerating these
$k$-chord diagrams by the number of short chords, where the latter is
defined as all members of the matched set being adjacent, and is the
generalization of a short chord or loop in a linear chord diagram. We
also enumerate $k$-chord diagrams by the number of connected
components built from short chords and provide the associated
generating functions in this case. We show that the distributions of
short chords and connected components are asymptotically Poisson, and
provide the associated means. Finally, we provide recurrence relations
enumerating non-crossing $k$-chord diagrams by the number of short
chords, generalising the Narayana numbers, and establish asymptotic
normality, providing the associated means and variances. Applications
to generalized games of memory are also discussed.
\end{abstract}

\section{Introduction and basic notions}
\label{Seckchords}

A chord diagram is a set of $n$ chords drawn between $2n$ distinct
points (which we call {\it vertices}) on a circle, such that each
vertex participates in exactly one chord. In a linear chord diagram
the circle is replaced by a linear arrangement of $2n$ vertices, and
the chords are represented as arcs. The study of chord diagrams has a
long and varied history. Early results by Touchard \cite{T} and
Riordan \cite{R} studied the number of crossings\footnote{Cf.\ Pilaud
  and Ru\'{e} \cite{PR} for a modern approach and further
  developments, and also Krasko and Omelchenko \cite{KO} for a more
  complete list of references.}. Kreweras and Poupard~\cite{KP}
studied linear chord diagrams. One of their results is the enumeration
of {\it short chords}, which are defined as chords formed on an
adjacent pair of vertices\footnote{Short chords are also called
  ``loops'' as in~\cite{KO}, or originally ``paires courtes''
  in~\cite{KP}.}.  They provided recurrence relations and closed form
expressions for the number of configurations with exactly $\ell$ short
chords. They also showed that the mean number of short chords is $1$,
which implies that the total number of short chords is equinumerous
with the total number of linear chord diagrams,
cf.\ \cite{CK}. Kreweras and Poupard~\cite{KP} showed further that all
higher factorial moments of the distribution approach $1$ in the
$n\to\infty$ limit, thus establishing the Poisson nature of the
asymptotic distribution.
\begin{figure}[H]
\begin{center}
\includegraphics[bb=0 0 315 68, height=0.5in]{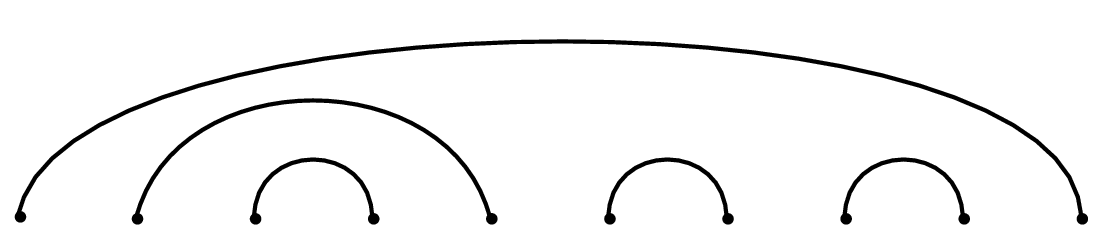}
\end{center}
\caption{A configuration of non-crossing chords.}
\label{Figlibres}
\end{figure}
Kreweras and Poupard~\cite{KP} also introduced the concept
of a ``free pair''\footnote{They refer to this as ``une paire
  libre''.}. A free pair is a chord which is not crossed by any other
chord, and only contains other free pairs, see
Figure~\ref{Figlibres}. We will refer to these as {\it non-crossing
  chords}. In Section~\ref{Secnc} we will review Kreweras and
Poupard's result that the number of linear chord diagrams consisting
entirely of non-crossing chords is in bijection with Dyck paths and is
hence counted by the Catalan numbers, and that the number of them with
exactly $\ell$ short chords is counted by the Narayana numbers.

In this paper, we consider a generalization of linear chord diagrams
which we refer\footnote{Pilaud and Ru\'{e} \cite{PR} call an
  essentially equivalent object a ``hyperchord diagram''.} to as
linear {\it $k$-chord diagrams}. In a $k$-chord diagram the basic
matching represented by a chord is enlarged from a pair of vertices to
a set of $k$ vertices, i.e., a {\it $k$-chord}, see
Figure~\ref{Figkchord}; a usual chord diagram corresponds to the case
$k=2$.
\begin{figure}[H]
\begin{center}
\includegraphics[bb=0 0 656 60, height=0.5in]{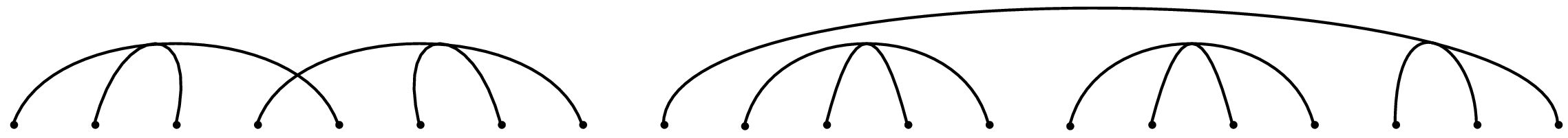}
\end{center}
\caption{A linear $k$-chord diagram with $k=4$ and $n=5$. This is a
  configuration with 2 short chords, 3 non-crossing chords, and 1
  connected component (formed by the two adjacent short chords).}
\label{Figkchord}
\end{figure}
The concept of a short chord generalizes directly, as follows:
\begin{definition}
  A \emph{short chord} is a $k$-chord formed on a set of
    adjacent vertices.
\end{definition}
We will also be concerned with the notion of a \emph{connected
  component} of short chords, see Figure~\ref{Figkchord}.
\begin{definition}\label{Defcomp}
  A \emph{connected component} in a linear $k$-chord diagram is a set
  of adjacent short chords.
\end{definition}
The concept of a non-crossing chord can be generalized to
$k$-chord diagrams as follows:
\begin{definition} 
  A \emph{non-crossing chord} is a $k$-chord which is not
    crossed by any other $k$-chord and only contains other non-crossing chords.
\end{definition}
Before presenting the main results of the paper, it is useful
to establish a few basic facts.
\begin{proposition}\label{Nnk}
The number of linear $k$-chord diagrams of length $kn$ is given by
\begin{equation}\nonumber
{\cal N}_{k,n}=\frac{(kn)!}{(k!)^n \,n!}.  
\end{equation}
\end{proposition}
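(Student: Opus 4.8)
The plan is to count linear $k$-chord diagrams by a direct combinatorial argument, treating the $kn$ vertices as labeled positions and partitioning them into $n$ matched sets of size $k$ each.

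The plan is to show that a linear $k$-chord diagram on $kn$ vertices is nothing more than an unordered partition of the vertex set $\{1,2,\ldots,kn\}$ into $n$ blocks of size $k$, and then to count such partitions directly. First I would observe that the linear arrangement fixes the $kn$ vertices as distinct, labeled points, and that a $k$-chord is precisely a choice of $k$ of these vertices to be matched together. Since every vertex lies in exactly one $k$-chord and the $n$ chords carry no intrinsic order, the data of a diagram is equivalent to a set partition of the $kn$ labeled vertices into $n$ unlabeled parts, each of cardinality $k$.

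To count these, I would use the standard overcounting argument. Lining up all $kn$ vertices in a sequence gives $(kn)!$ orderings; cutting the sequence into $n$ consecutive blocks of length $k$ assigns each ordering to a diagram, so $(kn)!$ surjects onto the set of diagrams. The key step is to quantify the overcounting: permuting the $k$ vertices within a single block leaves the corresponding $k$-chord unchanged, contributing a factor of $k!$ per block and hence $(k!)^n$ overall; and permuting the $n$ blocks among themselves leaves the (unordered) diagram unchanged, contributing a further factor of $n!$. Dividing out these symmetries yields
$${\cal N}_{k,n}=\frac{(kn)!}{(k!)^n\,n!}.$$

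The main thing to get right---rather than an obstacle in any deep sense---is to verify that these two symmetry groups act freely and independently, so that the total overcount is exactly $(k!)^n\,n!$ and nothing smaller. This follows because the blocks are of equal size and disjoint: no nontrivial within-block permutation can coincide with a permutation of the blocks, and distinct group elements produce distinct orderings of a fixed diagram. Equivalently, one can sidestep the group-action bookkeeping by building the partition greedily, choosing the block containing vertex $1$ in $\binom{kn-1}{k-1}$ ways, then the block containing the smallest remaining vertex in $\binom{kn-k-1}{k-1}$ ways, and so on; always selecting the block of the smallest unassigned vertex imposes a canonical order on the blocks, so nothing is over- or under-counted, and the resulting product $\prod_{j=0}^{n-1}\binom{k(n-j)-1}{k-1}$ telescopes to the same value, giving an independent check of the formula.
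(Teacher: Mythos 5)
Your proof is correct: a linear $k$-chord diagram is exactly an unordered partition of the $kn$ labeled vertices into $n$ blocks of size $k$, and the orbit-counting argument (dividing $(kn)!$ by $(k!)^n$ for within-block permutations and $n!$ for permutations of the blocks) is the standard derivation of this formula; your greedy telescoping product $\prod_{j=0}^{n-1}\binom{k(n-j)-1}{k-1}$ indeed simplifies to the same value and serves as a valid cross-check. The paper states this proposition as a basic fact without any proof, so your argument simply supplies the routine justification the author left implicit.
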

\begin{proposition}\label{Prpzero}
The number of number of linear $k$-chord diagrams of length $kn$
without any short chords is given by
\begin{equation}\nonumber
\sum_{j = 0}^{n} (-1)^j\,{\cal N}_{k,n-j}\,\rho_j ,
\end{equation}
where $\rho_j$ represents the number of ways of choosing $j$ disjoint
sub-paths, each with $k$ vertices, from the path of length $kn$. We
define $\rho_0$ to be 1.
\end{proposition}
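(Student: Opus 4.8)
The plan is to prove the claim by inclusion--exclusion over the possible locations for short chords. Since a short chord occupies a block of $k$ consecutive vertices, I first index the ``bad events'' by the windows that such a block can occupy: for each set $w$ of $k$ consecutive vertices among the $kn$ vertices of the path, let $A_w$ denote the collection of linear $k$-chord diagrams in which the vertices of $w$ form a $k$-chord (equivalently, a short chord sits on $w$). The diagrams with no short chord are precisely those lying outside every $A_w$, so the quantity to compute is $\bigl|\Omega \setminus \bigcup_w A_w\bigr|$, where $\Omega$ is the set of all linear $k$-chord diagrams and $|\Omega| = {\cal N}_{k,n}$ by Proposition~\ref{Nnk}.

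First I would apply the inclusion--exclusion principle to write
\[
\Bigl| \Omega \setminus \bigcup_w A_w \Bigr| = \sum_{S} (-1)^{|S|} \Bigl| \bigcap_{w \in S} A_w \Bigr|,
\]
the sum running over all sets $S$ of windows. The next step is to evaluate $\bigl|\bigcap_{w\in S} A_w\bigr|$. The crucial observation is that this intersection is empty whenever two windows in $S$ share a vertex, because each vertex belongs to exactly one $k$-chord and hence cannot simultaneously be part of two distinct short chords. Thus only sets $S$ of pairwise disjoint windows contribute.

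For a set $S$ of $j$ pairwise disjoint windows, I would argue that imposing a short chord on each window fixes $j$ of the $n$ $k$-chords and leaves $k(n-j)$ vertices to be matched into $n-j$ $k$-chords with no further constraint; crucially, a $k$-chord may be formed on \emph{any} $k$ vertices, so the scattered positions of the remaining vertices are immaterial and the number of such completions is ${\cal N}_{k,n-j}$, again by Proposition~\ref{Nnk}. Grouping the inclusion--exclusion sum by $j = |S|$ and noting that the number of sets of $j$ pairwise disjoint windows is exactly $\rho_j$ --- the number of ways of choosing $j$ disjoint sub-paths of $k$ vertices from the path of length $kn$ --- then yields $\sum_{j=0}^n (-1)^j \rho_j\, {\cal N}_{k,n-j}$, as claimed.

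The main obstacle is the bookkeeping around which windows can coexist: one must verify carefully that overlapping windows genuinely contribute nothing, so that the combinatorial factor is precisely $\rho_j$ rather than a count of all $j$-subsets of windows, and that the remaining vertices are matched freely regardless of their geometric arrangement. Both points hinge on the same elementary fact, namely that a $k$-chord is determined by, and only by, its unordered set of $k$ vertices, with adjacency entering only through the definition of ``short.''
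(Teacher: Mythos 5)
Your proof is correct and uses essentially the same approach as the paper: inclusion--exclusion over placements of forced short chords, with the two key evaluations being that a family of $j$ pairwise disjoint windows admits exactly ${\cal N}_{k,n-j}$ completions (Proposition~\ref{Nnk}) and that there are exactly $\rho_j$ such families. The only difference is presentational: you invoke the set-theoretic sieve formula for the complement of a union and must check that overlapping windows give empty intersections, whereas the paper verifies the cancellation directly by counting how many times a configuration with $q+j$ short chords is overcounted, namely $\binom{q+j}{j}$ times, and then applying the alternating binomial identity.
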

\begin{proof}
The proof proceeds via inclusion-exclusion. We note that for each of
the $\rho_j$ choices of $j$ sub-paths on which to place $j$ short
chords, there remains ${\cal N}_{k,n-j}$ configurations of the
remaining $n-j$ $k$-chords. There will be some number of
configurations among these ${\cal N}_{k,n-j}$ with exactly $q$ short
chords. Then ${\cal N}_{k,n-j}\,\rho_j$ counts the configurations with
exactly $q+j$ short chords ${q+j\choose j}$ times. Let $N(q)$ be the
number of configurations with exactly $q$ short chords, we therefore
have that
$$\sum_{j=0}^{n} (-1)^j\,{\cal N}_{k,n-j} \,\rho_j
=\sum_{j=0}^{n} (-1)^j
\sum_{q=0}^{n-j} {q+j\choose j} N(q+j)$$
$$=N(0) + \sum_{q+j=1}^n N(q+j) \sum_{j=0}^{q+j} (-1)^j 
{q+j\choose j},$$
and so all but the zero-short-chord configurations cancel.
\end{proof}
\begin{theorem}\label{Thmmean}
The mean number of short chords in a linear $k$-chord diagram of
length $kn$ is given by
\begin{equation}\nonumber
{kn \choose k}^{-1}n\left(kn-(k-1)\right).
\end{equation}
\end{theorem}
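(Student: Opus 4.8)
The plan is to compute the mean by linearity of expectation (equivalently, a double-counting argument) over the uniform ensemble of ${\cal N}_{k,n}$ diagrams. First I would observe that a short chord occupies a block of $k$ consecutive vertices, and that in a path of $kn$ vertices there are exactly $kn-k+1 = kn-(k-1)$ such windows, each indexed by its leftmost vertex. For a fixed window $w$, let $\mathbb{1}_w$ denote the indicator that the $k$ vertices of $w$ constitute one of the $n$ $k$-chords of a given diagram. Then the number of short chords in a diagram is $\sum_w \mathbb{1}_w$, and the mean number of short chords is $\sum_w \Pr[\mathbb{1}_w=1]$, where the probability is taken with respect to the uniform distribution on the ${\cal N}_{k,n}$ diagrams.

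Next I would count, for a fixed window $w$, the number of diagrams in which $w$ is a $k$-chord. Declaring the $k$ vertices of $w$ to form a single $k$-chord leaves $k(n-1)$ vertices to be partitioned into $n-1$ $k$-chords, which by Proposition~\ref{Nnk} can be done in ${\cal N}_{k,n-1}$ ways. Crucially, this count is independent of which window $w$ was chosen, so $\Pr[\mathbb{1}_w=1] = {\cal N}_{k,n-1}/{\cal N}_{k,n}$ for every $w$. Summing over all $kn-(k-1)$ windows then yields the mean as $(kn-(k-1))\,{\cal N}_{k,n-1}/{\cal N}_{k,n}$.

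Finally I would simplify the ratio using the closed form of Proposition~\ref{Nnk}. A direct computation gives
$$\frac{{\cal N}_{k,n-1}}{{\cal N}_{k,n}} = \frac{(k(n-1))!\,(k!)^{n}\,n!}{(k!)^{n-1}\,(n-1)!\,(kn)!} = \frac{n\,k!\,(kn-k)!}{(kn)!} = n\binom{kn}{k}^{-1},$$
whence the mean equals $\binom{kn}{k}^{-1}\,n\,(kn-(k-1))$, as claimed.

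As for the main obstacle, there is essentially no hard step: the only point requiring care is that linearity of expectation demands no independence among the window-indicators. Indeed, overlapping windows can never be short chords simultaneously, since $k$-chords are disjoint, so the $\mathbb{1}_w$ are positively or negatively correlated; nonetheless the summation of expectations is valid regardless of these correlations. The remaining work is merely the routine factorial simplification of ${\cal N}_{k,n-1}/{\cal N}_{k,n}$ displayed above.
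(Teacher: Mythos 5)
Your proposal is correct and follows essentially the same route as the paper: linearity of expectation over the $kn-(k-1)$ windows of $k$ consecutive vertices, with $E(X_j)={\cal N}_{k,n-1}/{\cal N}_{k,n}$ computed by fixing a short chord on a window and counting the ${\cal N}_{k,n-1}$ completions via Proposition~\ref{Nnk}. The only difference is cosmetic: you carry out the factorial simplification ${\cal N}_{k,n-1}/{\cal N}_{k,n}=n\binom{kn}{k}^{-1}$ explicitly, which the paper leaves implicit.
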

\begin{proof}
The proof proceeds through the linearity of expectation. Let the
random variable $X_j$ take the value $1$ when the $j^{\text{th}}$
consecutive set of $k$ vertices forms a short chord and $0$
otherwise. Once a short chord is thusly placed, by
Proposition~\ref{Nnk}, there are ${\cal N}_{k,n-1}$ ways of placing
the remaining $k$-chords on the $kn-k$ remaining vertices. Thus
$E(X_j) = {\cal N}_{k,n-1}/{\cal N}_{k,n}$. We therefore have that
$E(\sum_{j} X_j) = \sum_{j} E(X_j) = \left(kn-(k-1)\right) \, {\cal
  N}_{k,n-1}/{\cal N}_{k,n}$, where we have used the fact that there
are $\left(kn-(k-1)\right)$ consecutive sets of $k$ vertices.
\end{proof}
The result of Theorem~\ref{Thmmean} shows that the total
number of $k$-chords in all diagrams of a fixed length does not equal
the total number of diagrams, unless $k=2$. In the limit of long
diagrams, the mean scales as $\lim_{n\to\infty} {kn\choose
  k}^{-1}n\,(kn-(k-1)) = k!\,k^{1-k}\,n^{2-k},$ and is hence generally
suppressed for large $n$.

\section{Enumeration by short chords}

We begin by enumerating configurations by number of short chords.
\begin{theorem}\label{ThmCF}
  The number $d_{n,\ell}$ of linear $k$-chord diagrams of length $kn$ with
  exactly $\ell$ short chords is
  \begin{equation}\nonumber
    d_{n,\ell} = \frac{1}{\ell!}\sum_{j=\ell}^n \frac{(k(n-j)+j)!\,(-1)^{j-\ell}}
    {(k!)^{n-j}\,(n-j)!\,(j-\ell)!}.
    \end{equation}
\end{theorem}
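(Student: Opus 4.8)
The plan is to generalize the inclusion--exclusion argument underlying Proposition~\ref{Prpzero} (which is exactly the special case $\ell=0$) and then to recover the precise count $d_{n,\ell}$ by binomial inversion. First I would evaluate $\rho_j$ in closed form. Placing $j$ pairwise disjoint blocks of $k$ consecutive vertices on the path of $kn$ vertices is the same as arranging in a row $j$ blocks (each of width $k$) together with the $k(n-j)$ leftover single vertices; choosing which of the resulting $j+k(n-j)$ positions hold the blocks gives
$$\rho_j=\binom{k(n-j)+j}{j}.$$

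Next I would reuse the over-counting identity at the heart of the proof of Proposition~\ref{Prpzero}. Writing $N(m)$ for the number of diagrams with exactly $m$ short chords, each choice of $j$ designated sub-paths (there are $\rho_j$ of them) together with one of the ${\cal N}_{k,n-j}$ arrangements of the remaining $n-j$ chords yields a diagram, and a diagram having $m$ short chords is produced exactly $\binom{m}{j}$ times---once for each way of deciding which $j$ of its $m$ short chords were designated. This gives
$${\cal N}_{k,n-j}\,\rho_j=\sum_{m\ge j}\binom{m}{j}\,N(m).$$

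The key step is to invert this triangular system. Forming the alternating sum $\sum_{j\ge\ell}(-1)^{j-\ell}\binom{j}{\ell}\,{\cal N}_{k,n-j}\,\rho_j$ and applying the identity $\binom{m}{j}\binom{j}{\ell}=\binom{m}{\ell}\binom{m-\ell}{j-\ell}$, the inner sum over $j$ reduces to $\sum_{i}(-1)^{i}\binom{m-\ell}{i}$, which vanishes unless $m=\ell$. Hence
$$d_{n,\ell}=N(\ell)=\sum_{j=\ell}^{n}(-1)^{j-\ell}\binom{j}{\ell}\,{\cal N}_{k,n-j}\,\rho_j.$$

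Finally, substituting ${\cal N}_{k,n-j}=\dfrac{(k(n-j))!}{(k!)^{n-j}(n-j)!}$, the formula for $\rho_j$ above, and $\binom{j}{\ell}=\dfrac{j!}{\ell!\,(j-\ell)!}$, the factors $j!$ and $(k(n-j))!$ cancel throughout and one is left with the stated expression. I expect the main obstacle to be the careful bookkeeping of the over-counting multiplicity $\binom{m}{j}$ in the second step; once that identity is justified, the inversion and the final factorial simplification are routine.
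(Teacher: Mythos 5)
Your proposal is correct and takes essentially the same route as the paper: your closed form $\rho_j=\binom{k(n-j)+j}{j}$ coincides with Lemma~\ref{Lemchpath} (since $kn-j(k-1)=k(n-j)+j$), your overcounting identity is Lemma~\ref{Lematleast}, and your binomial inversion is precisely the inclusion--exclusion sieve of Lemma~\ref{Lemnearly}, there written as extracting $[z^\ell]$ of $\sum_j {\cal N}_{k,n-j}\,\rho_j\,(z-1)^j$. The final factorial simplification is exactly the ``direct calculation'' the paper invokes to conclude Theorem~\ref{ThmCF}.
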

\begin{proof}
  This follows from direct calculation from Lemma~\ref{Lemnearly}.
\end{proof}

In what follows, OEIS denotes the {\it On-Line Encyclopedia of Integer
Sequences} \cite{Sloane}.
\begin{table}[H]
\begin{center}
  \begin{tabular}{c|lllllll}
  $n$ \textbackslash$\ell$& 0& 1& 2& 3& 4& 5& 6\\
  \hline  
  1& 0& 1 \\
  2& 7& 2& 1\\
  3& 219& 53& 7& 1\\
  4& 12861& 2296& 226& 16& 1\\
  5& 1215794& 171785& 13080& 710& 30& 1\\
  6& 169509845& 19796274& 1228655& 53740& 1835& 50& 1\\
\end{tabular}
\end{center}
\caption{The number $d_{n,\ell}$ of linear $k$-chord diagrams of
  length $kn$ with exactly $\ell$ short chords, for the case
  $k=3$. OEIS sequence \seqnum{A334056}; the $k=4$ case is
  \seqnum{A334057} and the $k=5$ case is \seqnum{A334058}.}
\end{table}

\begin{lemma}\label{Lemchpath}
The number of ways of choosing $j$ pairwise non-overlapping subpaths,
each of length $k$, from the path of length $\ell$ is
\begin{equation}\nonumber
  {\ell - j(k-1)\choose j}.
  \end{equation}
\end{lemma}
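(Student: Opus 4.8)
The plan is to reduce the count to a single binomial coefficient via a contraction bijection. First I would fix the interpretation: the path of length $\ell$ consists of $\ell$ vertices arranged in a line, a subpath of length $k$ is a block of $k$ consecutive vertices, and ``pairwise non-overlapping'' means the chosen blocks are vertex-disjoint — note that adjacent blocks are permitted, since these are precisely the configurations that later assemble into connected components. Thus a configuration to be counted is an unordered choice of $j$ disjoint size-$k$ blocks among the $\ell$ vertices.

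The key step is the contraction argument. Given such a configuration, I would collapse each of the $j$ chosen blocks to a single marker, leaving the $\ell - jk$ uncovered vertices in place. Reading left to right, this produces a linear sequence of $\ell - jk + j = \ell - j(k-1)$ symbols, exactly $j$ of which are markers. Conversely, from any placement of $j$ markers among $\ell - j(k-1)$ positions I recover a unique legal configuration by expanding each marker back into $k$ consecutive vertices. Since expansion preserves the left-to-right order of the blocks and the number of uncovered vertices in each gap, contraction and expansion are mutually inverse, so the map is a bijection. As the number of ways to choose which $j$ of the $\ell - j(k-1)$ positions are markers is $\binom{\ell - j(k-1)}{j}$, this yields the claimed formula.

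As a cross-check I would present the equivalent gap formulation: writing $g_0, g_1, \dots, g_j \ge 0$ for the numbers of uncovered vertices lying before, between, and after the $j$ blocks, a configuration corresponds to a nonnegative solution of $g_0 + g_1 + \cdots + g_j = \ell - jk$. The number of such solutions is $\binom{(\ell - jk) + j}{j} = \binom{\ell - j(k-1)}{j}$, which both matches the contraction count and makes the algebraic simplification $\ell - jk + j = \ell - j(k-1)$ explicit.

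There is no deep obstacle here; the only points requiring care are making the bijection genuinely well-defined — in particular, confirming that adjacency of blocks (empty interior gaps) is correctly allowed, and that the count degenerates to $0$ exactly when $\ell < jk$. This last boundary case is consistent with $\binom{\ell - j(k-1)}{j}$ vanishing precisely once $\ell - j(k-1) < j$, and I would note this agreement directly.
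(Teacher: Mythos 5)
Your contraction bijection is exactly the paper's argument: the paper likewise collapses each chosen subpath onto a single marked vertex, leaving $\ell - j(k-1)$ vertices of which $j$ are marked, giving $\binom{\ell - j(k-1)}{j}$. Your proposal is correct and just adds extra verification (the explicit inverse map and the stars-and-bars cross-check) on top of the same idea.
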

\begin{proof}
For each of the $j$ subpaths, collapse the vertices of that subpath
onto the left-most vertex, and mark it. We are left with $\ell-j(k-1)$
vertices, $j$ of which are marked. Thus there are ${\ell - j(k-1)\choose j}$
choices for the positions of the marked vertices.
\end{proof}

\begin{lemma}\label{Lematleast}
The number of linear $k$-chord diagrams of length $kn$ with at least
$j$ short chords is given by
\begin{equation}\nonumber
  \frac{(k(n-j))!}{(k!)^{n-j} \,(n-j)!} {kn - j(k-1)\choose j}.
  \end{equation}
\end{lemma}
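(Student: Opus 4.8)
The plan is to count configurations with at least $j$ short chords by first selecting the positions of the $j$ short chords and then distributing the remaining $n-j$ $k$-chords freely on the leftover vertices. The two ingredients needed are exactly the two preceding results in the paper: Lemma~\ref{Lemchpath} to count placements of the short chords, and Proposition~\ref{Nnk} to count the remaining diagrams. First I would invoke Lemma~\ref{Lemchpath} with $\ell = kn$ (the total number of vertices) to conclude that the number of ways to choose $j$ pairwise non-overlapping subpaths, each of length $k$, on which to place the $j$ designated short chords is ${kn - j(k-1)\choose j}$. Each such choice fixes $j$ short chords occupying $jk$ vertices.

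Next I would observe that after placing these $j$ short chords, there remain $kn - jk = k(n-j)$ vertices, on which the remaining $n-j$ $k$-chords must form an arbitrary linear $k$-chord diagram. By Proposition~\ref{Nnk}, the number of such diagrams of length $k(n-j)$ is
\begin{equation}\nonumber
{\cal N}_{k,n-j} = \frac{(k(n-j))!}{(k!)^{n-j}\,(n-j)!}.
\end{equation}
Multiplying the two counts gives the stated product
\begin{equation}\nonumber
\frac{(k(n-j))!}{(k!)^{n-j}\,(n-j)!}{kn - j(k-1)\choose j}.
\end{equation}

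The subtle point — and the main thing requiring care rather than the main obstacle — is the precise meaning of ``at least $j$ short chords'' in this count. The product above is not simply the cardinality of the set of diagrams having $j$ or more short chords; rather it counts pairs consisting of a diagram together with a distinguished set of $j$ of its short chords. Equivalently, a diagram with exactly $q \ge j$ short chords is counted ${q\choose j}$ times, since any $j$ of its $q$ short chords could have been the ones selected in the first step (the remaining short chords simply arise among the ${\cal N}_{k,n-j}$ free configurations). I would therefore state explicitly that the quantity being computed is $\sum_{q\ge j}{q\choose j}N(q)$, where $N(q)$ is the number of diagrams with exactly $q$ short chords, and note that this weighted sum is precisely the form consumed by the inclusion–exclusion argument of Proposition~\ref{Prpzero} and by the direct extraction of $d_{n,\ell}$ in Theorem~\ref{ThmCF}. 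This interpretation is what makes the lemma useful downstream, so I would make it the rhetorical centerpiece of the proof rather than burying it.
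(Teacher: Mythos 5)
Your proposal is correct and follows essentially the same route as the paper's own (very brief) proof: choose the $j$ short-chord positions via Lemma~\ref{Lemchpath} and multiply by ${\cal N}_{k,n-j}$ from Proposition~\ref{Nnk} for the remaining chords. Your additional remark that the product actually counts pairs (diagram, distinguished set of $j$ short chords), i.e., $\sum_{q\ge j}\binom{q}{j}N(q)$, is a correct and worthwhile clarification of what ``at least $j$'' must mean for the downstream inclusion--exclusion in Lemma~\ref{Lemnearly} to work --- a point the paper leaves implicit.
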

\begin{proof}
We choose $j$ pairwise non-overlapping subpaths, enumerated according
to Lemma~\ref{Lemchpath}, to place the short chords upon. By
Proposition~\ref{Nnk}, for each such choice we have ${\cal N}_{k,n-j}$
ways of placing the remaining $k$-chords.
\end{proof}

\begin{lemma}\label{Lemnearly}
The number of linear $k$-chord diagrams of length $kn$ with exactly
$\ell$ short chords is given by
\begin{equation}\nonumber
[z^\ell] \sum_{j=0}^n
  \frac{(k(n-j))!}{(k!)^{n-j} \,(n-j)!} {kn - j(k-1)\choose j} (z-1)^j.
\end{equation}
\end{lemma}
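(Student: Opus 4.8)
The plan is to take Lemma~\ref{Lematleast} as the fundamental input and to recover the exact count by binomial inversion, which the stated generating function encodes compactly. Write
\begin{equation}\nonumber
A_j = \frac{(k(n-j))!}{(k!)^{n-j}\,(n-j)!}\binom{kn - j(k-1)}{j}
\end{equation}
for the quantity appearing in Lemma~\ref{Lematleast}, and let $N(q)$ denote the number of linear $k$-chord diagrams of length $kn$ with exactly $q$ short chords. The key claim is the identity
\begin{equation}\nonumber
A_j = \sum_{q=j}^{n}\binom{q}{j}\,N(q),
\end{equation}
after which the lemma follows almost immediately.

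To establish the claim I would reinterpret the count in Lemma~\ref{Lematleast} as counting \emph{pairs} consisting of a choice of $j$ pairwise non-overlapping length-$k$ subpaths carrying short chords (enumerated by Lemma~\ref{Lemchpath}) together with an arbitrary completion of the remaining $n-j$ $k$-chords. Each such pair determines a single linear $k$-chord diagram, in which the $j$ marked subpaths are among its short chords. Conversely, a fixed diagram with exactly $q$ short chords arises from precisely those pairs whose $j$ marked subpaths form a $j$-element subset of its $q$ short chords: there are $\binom{q}{j}$ such subsets, and in each case the rest of the diagram is the (otherwise unconstrained) completion. Hence every exact-$q$ diagram is counted exactly $\binom{q}{j}$ times in $A_j$, yielding the identity. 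This multiplicity bookkeeping is the step demanding the most care: one must verify that distinct marked subpaths correspond to genuinely distinct short chords of the completed diagram, and that \emph{every} $j$-subset of the final short chords is realized by some pair, so that no diagram is over- or under-counted.

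Finally, I would package the inversion through the exact-count generating polynomial $P(z) = \sum_{q=0}^{n} N(q)\,z^q$. Multiplying the key identity by $w^j$, summing over $j$, and exchanging the order of summation gives
\begin{equation}\nonumber
\sum_{j=0}^{n} A_j\,w^j = \sum_{q=0}^{n} N(q)\sum_{j=0}^{q}\binom{q}{j}w^j = \sum_{q=0}^{n} N(q)\,(1+w)^q = P(1+w).
\end{equation}
Substituting $w = z-1$ yields $\sum_{j=0}^{n} A_j\,(z-1)^j = P(z)$, so the coefficient of $z^\ell$ on the left equals $N(\ell)$, which is exactly the assertion of the lemma.
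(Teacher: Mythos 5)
Your proof is correct and takes essentially the same approach as the paper: the paper's proof of Lemma~\ref{Lemnearly} is simply a citation of the standard inclusion-exclusion (sieve) argument in Wilf, which is precisely the $A_j=\sum_q\binom{q}{j}N(q)$ identity and binomial inversion via $w=z-1$ that you spell out in full. Your write-up supplies the details the paper delegates to the reference, including the multiplicity bookkeeping that the paper itself uses in the proof of Proposition~\ref{Prpzero}.
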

\begin{proof}
This follows from inclusion-exclusion, cf.\ \cite[p.\ 112]{Wilf}.  
\end{proof}

\subsection{Two recurrence relations}

Kreweras and Poupard~\cite{KP} gave a recurrence relation for the
$d_{n,\ell}$ for the case of $k=2$,$$\ell\, d_{n,\ell} = (2n -
\ell)\,d_{n-1,\ell-1} + \ell\,d_{n-1,\ell}\quad\Leftrightarrow\quad
k=2.$$ They obtained this by considering the removal of a short chord at a
given position in the chord diagram. This short chord is either nested
directly inside another chord, such that its removal does not change
the number of short chords, or it is not. A sum over all possible
positions of the given short chord then results in the recurrence
relation. We now give a generalization of this recurrence relation to
the case of general $k$. Rather than attempt to repeat Kreweras and
Poupard's arguments, which should be possible but rather complicated
due to dealing with the ends of the diagram, we prove our recurrence
relation directly from Theorem~\ref{ThmCF}.

\begin{theorem}\label{ThmKP1}
  The numbers $d_{n,\ell}$ satisfy the following recurrence relation
  \begin{equation}\nonumber
    \ell\, d_{n,\ell} = (kn - \ell(k-1))\,d_{n-1,\ell-1} + \ell(k-1)\,d_{n-1,\ell}.
    \end{equation}
\end{theorem}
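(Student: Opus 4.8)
The plan is to pass to the ordinary generating function $F_n(z)=\sum_{\ell} d_{n,\ell}\,z^\ell$ and to show that the asserted three-term recurrence is equivalent to a clean one-step identity among the ``at least $j$'' counts supplied by Lemma~\ref{Lematleast}. Writing $a_{n,j}=\frac{(k(n-j))!}{(k!)^{n-j}(n-j)!}\binom{kn-j(k-1)}{j}$ for that count, Lemma~\ref{Lemnearly} says precisely that $F_n(z)=\sum_{j=0}^n a_{n,j}(z-1)^j$, so it suffices to reason with the $a_{n,j}$.

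First I would multiply the claimed recurrence by $z^\ell$ and sum over $\ell$, turning each factor of $\ell$ into the operator $z\,d/dz$. Using $\sum_\ell d_{n-1,\ell-1}z^\ell=zF_{n-1}$ and differentiating $zF_{n-1}$ to handle the $\ell\,d_{n-1,\ell-1}$ term, the recurrence becomes, after cancelling a common factor of $z$, the single functional equation
\[
F_n'(z)=\big(kn-(k-1)\big)F_{n-1}(z)+(k-1)(1-z)F_{n-1}'(z).
\]
This step is pure bookkeeping; the only care needed is the index shift in the $d_{n-1,\ell-1}$ term.

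Next I would substitute $F_n=\sum_j a_{n,j}(z-1)^j$ and change variables to $w=z-1$, so that $1-z=-w$ and $d/dz=d/dw$. Comparing coefficients of $w^j$ on both sides collapses the functional equation to the scalar relation
\[
(j+1)\,a_{n,j+1}=\big(kn-(k-1)(j+1)\big)\,a_{n-1,j},
\]
equivalently $\ell\,a_{n,\ell}=\big(kn-(k-1)\ell\big)\,a_{n-1,\ell-1}$. Finally I would verify this directly: writing $\binom{kn-j(k-1)}{j}=\binom{k(n-j)+j}{j}$ simplifies $a_{n,\ell}$ to $\frac{(k(n-\ell)+\ell)!}{(k!)^{n-\ell}(n-\ell)!\,\ell!}$, and the whole identity then rests on the single observation $kn-(k-1)\ell=k(n-\ell)+\ell$, which makes both sides equal $\frac{(k(n-\ell)+\ell)!}{(k!)^{n-\ell}(n-\ell)!\,(\ell-1)!}$.

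The main obstacle is really just the reduction in the first two steps: recognising that the intricate-looking recurrence is the coefficient form of a first-order relation in $z$, and that in the shifted variable $w=z-1$ it decouples into an elementary factorial identity for the $a_{n,\ell}$. Once that reduction is in hand, the remaining algebra is immediate.
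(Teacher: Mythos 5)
Your proof is correct, and it takes a genuinely different route from the paper's. The paper proves the recurrence by brute-force manipulation of the closed form in Theorem~\ref{ThmCF}: it considers $d_{n-1,\ell-1}-d_{n-1,\ell}$, shifts the summation index in each alternating sum, strips off the boundary term, and recombines everything until $\ell\,d_{n,\ell}$ reappears. You instead work one level up, with the polynomial $F_n(z)=\sum_\ell d_{n,\ell}z^\ell$ and the expansion $F_n(z)=\sum_j a_{n,j}(z-1)^j$ supplied by Lemma~\ref{Lemnearly}, where the $a_{n,j}$ are the ``at least $j$'' counts of Lemma~\ref{Lematleast}. Your two reductions check out: matching coefficients of $z^{\ell-1}$ shows the recurrence is equivalent to
\begin{equation}\nonumber
F_n'(z)=\bigl(kn-(k-1)\bigr)F_{n-1}(z)+(k-1)(1-z)F_{n-1}'(z),
\end{equation}
and in the variable $w=z-1$ this decouples (since both sides are polynomials in $w$) into the sum-free identity $\ell\,a_{n,\ell}=\bigl(kn-(k-1)\ell\bigr)a_{n-1,\ell-1}$, which indeed follows at once from $a_{n,\ell}=\frac{(k(n-\ell)+\ell)!}{(k!)^{n-\ell}(n-\ell)!\,\ell!}$ and $kn-(k-1)\ell=k(n-\ell)+\ell$. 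Both arguments ultimately rest on the same inclusion-exclusion data (Theorem~\ref{ThmCF} is just the coefficient extraction of Lemma~\ref{Lemnearly} written out), but the mechanisms differ: the paper's proof is a direct, somewhat delicate handling of alternating double sums, while yours isolates the recurrence's true source in a single two-term factorial identity for the ``at least'' counts, with the $(z-1)$-basis doing the work of diagonalizing the relation. Your version is arguably more transparent and less error-prone; its only cost is the bookkeeping needed to justify the two coefficient-matching equivalences, which you handle correctly.
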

\begin{proof}
We use the result of Theorem~\ref{ThmCF} and consider $
d_{n-1,\ell-1}-d_{n-1,\ell}$. In each of the two summands, we
shift the summation variable $j\to j-1$. This procedure results in a
relative sign difference between the summands. In $d_{n-1,\ell-1}$
the sum begins at $j=\ell$ as before, whereas in $d_{n-1,\ell}$, it
now begins at $j=\ell+1$. We proceed by stripping off the $j=\ell$ term from $d_{n-1,\ell-1}$
\begin{align}\nonumber
    &d_{n-1,\ell-1}-d_{n-1,\ell} =
\frac{(k(n-\ell)+\ell-1)!}
    {(k!)^{n-\ell}\,(n-\ell)!}
    \frac{1}{(\ell-1)!} \\\nonumber
    &\qquad\qquad+\sum_{j=\ell+1}^n\frac{(k(n-j)+j-1)!\,(-1)^{j-\ell}}
    {(k!)^{n-j}\,(n-j)!}
    \left(
    \frac{1}{(j-\ell)!(\ell-1)!} + \frac{1}{(j-\ell-1)!\ell!} 
    \right)\\\nonumber
     &=  \frac{(k(n-\ell)+\ell-1)!}
    {(k!)^{n-\ell}\,(n-\ell)!}
    \frac{1}{(\ell-1)!}
    + \sum_{j=\ell+1}^n\frac{(k(n-j)+j-1)!\,(-1)^{j-\ell}}
    {(k!)^{n-j}\,(n-j)!}
    \left(
    \frac{j/\ell}{(j-\ell)!(\ell-1)!} 
    \right).
\end{align}
It follows that
\begin{align}\nonumber
    &kn\,d_{n-1,\ell-1} -\ell(k-1) \left( d_{n-1,\ell-1}-d_{n-1,\ell} \right)\\\nonumber
    &\qquad= \frac{(k(n-\ell)+\ell-1)!}
    {(k!)^{n-\ell}\,(n-\ell)!}
    \frac{(kn-\ell(k-1))}{(\ell-1)!}\\\nonumber
    &\qquad\qquad\qquad\qquad\qquad\qquad
    +\sum_{j=\ell+1}^n\frac{(k(n-j)+j-1)!\,(-1)^{j-\ell}}
    {(k!)^{n-j}\,(n-j)!}
    \left(
    \frac{kn-j(k-1)}{(j-\ell)!(\ell-1)!} 
    \right)\\\nonumber
   &\qquad= \frac{1}{(\ell-1)!}\sum_{j=\ell}^n \frac{(k(n-j)+j)!\,(-1)^{j-\ell}}
    {(k!)^{n-j}\,(n-j)!\,(j-\ell)!} = \ell\, d_{n,\ell}.
 \end{align}
\end{proof}

Krasko and Omelchenko~\cite{KO} gave another recurrence relation for
the $d_{n,\ell}$ in the case of $k=2$, $$ d_{n+1,\ell} = d_{n,\ell-1} +
(2n-\ell)d_{n,\ell} + (\ell+1)d_{n,\ell+1}\quad\Leftrightarrow\quad
k=2.$$ This relation is found by considering the addition of a new
chord, with one end anchored outside the existing vertices. The other
end of the chord could be placed in various positions: adjacent to the
anchored end hence forming an external short chord, in a location
which disturbs no existing short chord, or within an existing short
chord and hence breaking it up. These cases account, respectively, for
the three terms in the recurrence relation. To generalize this to a
$k$-chord diagram we must account for a larger variety of cases.

\begin{theorem}\label{ThmKP2}
  The numbers $d_{n,\ell}$ satisfy the following recurrence relation
  \begin{equation}\nonumber
    d_{n+1,\ell} = d_{n,\ell-1} + d_{n,\ell} \sum_{h=1}^{k-1}
    {kn-(k-1)\ell+k-h-1 \choose k-h}
    + \sum_{p=1}^{k-1} C_{n,\ell,p,k} \,d_{n,\ell+p} ,
  \end{equation}
  where the $C_{n,\ell,p,k}$ are given by
  \begin{equation}\nonumber
    C_{n,\ell,p,k} = \sum_{h=1}^{k-p}\sum_{f=0}^{k-p-h}
    {kn-(k-1)(\ell+p)+f-1 \choose f}
    [x^{k-h-f}y^p]\left(1+y-y(1-x)^{1-k}\right)^{-\ell-1}.
   \end{equation}
\end{theorem}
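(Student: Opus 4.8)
The plan is to prove this recurrence bijectively, generalizing the Krasko--Omelchenko construction rather than computing from Theorem~\ref{ThmCF}. Given a target diagram counted by $d_{n+1,\ell}$, I would identify the unique $k$-chord $C$ containing the leftmost vertex, delete its $k$ vertices, and close up the resulting gaps to obtain a \emph{source} diagram of length $kn$. This sets up a bijection between target diagrams and pairs (source diagram, insertion data), where the insertion data records the positions of the $k$ vertices of $C$ relative to the source, the leftmost being pinned to the far left. The key structural observation is that inserting the vertices of $C$ can only ever destroy short chords of the source (by splitting their runs of adjacent vertices) and can create at most the single short chord $C$ itself; it never turns a non-short chord of the source into a short one. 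Hence if $C$ is itself short it contributes the term $d_{n,\ell-1}$ (the source then has $\ell-1$ short chords and no chord of it is broken), while if $C$ is not short and breaks exactly $p\ge 0$ short chords of the source, the source must have $\ell+p$ short chords. This partitions the insertions into the three families on the right-hand side, and one checks that each target is reconstructed exactly once.

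For the counting I would describe each insertion by how the $k-1$ non-pinned vertices of $C$ are distributed among the $kn+1$ gaps of the source. Call a gap \emph{safe} if it is not interior to a short chord; since a short chord has $k-1$ interior gaps, a source with $M$ short chords has $kn+1-M(k-1)$ safe gaps, exactly one of which is the leftmost gap. I would introduce $h$ so that $h-1$ counts the vertices of $C$ placed in the leftmost gap. In the non-breaking case ($p=0$, source with $M=\ell$) all $k-1$ vertices go into safe gaps: $h-1$ into the leftmost gap and the remaining $k-h$ into the other $kn-(k-1)\ell$ safe gaps, the constraint $k-h\ge 1$ being precisely the condition that $C$ is not short. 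A stars-and-bars count gives $\binom{kn-(k-1)\ell+(k-h)-1}{k-h}$, and summing over $h=1,\dots,k-1$ yields the stated coefficient of $d_{n,\ell}$; the single excluded configuration ``all $k-1$ vertices in the leftmost gap'' is exactly the one reassigned to the $d_{n,\ell-1}$ term.

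The breaking case ($p\ge 1$) is where the generating function enters and is the main obstacle. Now the source has $M=\ell+p$ short chords, of which exactly $p$ are broken, each broken chord receiving at least one inserted vertex in its $k-1$ interior gaps. I would split the $k-1$ vertices of $C$ into $h-1$ in the leftmost gap, $f$ in the remaining $kn-(k-1)(\ell+p)$ safe gaps (contributing $\binom{kn-(k-1)(\ell+p)+f-1}{f}$), and $k-h-f$ used for breaking; here $C$ is automatically non-short since it straddles a source vertex. The crux is to identify the number of ways to choose which $p$ of the $M$ short chords to break and to distribute the $k-h-f$ breaking vertices among their interior gaps, at least one per chord, as $[x^{k-h-f}y^p]\bigl(1+y-y(1-x)^{1-k}\bigr)^{-\ell-1}$. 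This rests on the identity $[y^p]\bigl(1+y-y(1-x)^{1-k}\bigr)^{-\ell-1}=\binom{\ell+p}{p}\bigl((1-x)^{1-k}-1\bigr)^p$, in which $\binom{\ell+p}{p}=\binom{M}{p}$ counts the choice of broken chords and each factor $(1-x)^{1-k}-1=\sum_{i\ge1}\binom{k-2+i}{i}x^i$ is the generating function for placing at least one vertex among one chord's $k-1$ interior gaps. Matching the vertex budget $(h-1)+f+(k-h-f)=k-1$ and the ranges $1\le h\le k-p$, $0\le f\le k-p-h$ (forced by needing $k-h-f\ge p$ breaking vertices) then assembles $C_{n,\ell,p,k}$.

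The difficulty I anticipate is the boundary bookkeeping at the pinned left end: the leftmost gap is simultaneously the locus of the external short chord and an ordinary safe gap, so it must be carefully separated from the other safe gaps through the parameter $h$, and one must confirm that every insertion is counted once with the correct shift in the short-chord count. Establishing the generating-function identity above and verifying the exactness of the bijection (in particular that deleting $C$ converts each broken target chord back into a short source chord and creates no spurious short chords) is the heart of the argument; the $k=2$ specialization, which recovers $d_{n,\ell-1}+(2n-\ell)d_{n,\ell}+(\ell+1)d_{n,\ell+1}$, provides a useful consistency check along the way.
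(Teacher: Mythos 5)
Your proposal is correct and is essentially the paper's own proof: the paper establishes the recurrence by exactly the same combinatorial decomposition, adjoining the $k$-chord anchored at one end of the diagram (the right end there, your left end) and splitting its strands into ``home'' positions (your leftmost gap, parameterized by $h$), ``forest'' positions (your safe gaps, counted by the same stars-and-bars coefficient), and strands disturbing $p$ existing short chords. Your generating-function identity $[y^p]\left(1+y-y(1-x)^{1-k}\right)^{-\ell-1}=\binom{\ell+p}{p}\left((1-x)^{1-k}-1\right)^p$ is precisely the paper's Lemma~\ref{LemKP2}, proved there in the same way.
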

\begin{figure}[H]
\begin{center}
\includegraphics[bb=0 0 440 68, height=0.5in]{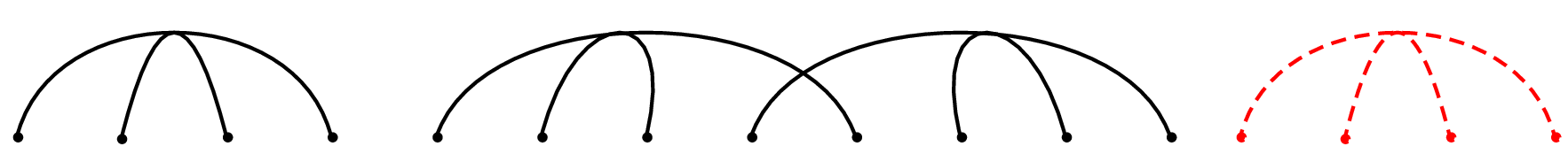}
\end{center}
\caption{The first term in the recurrence relation in
  Theorem~\ref{ThmKP2}. A new $k$-chord (shown in
  red dashes) is added as a short chord to the end of an existing
  configuration.}
\label{FigKP2addone}
\end{figure}

\begin{proof}
The recurrence relation arises from the addition of a new $k$-chord, at
least one strand of which is anchored to the right of all existing
vertices, to a configuration of length $kn$. One of the ways we could
add these vertices is as a short chord placed on the end of the existing
configuration, as in Figure~\ref{FigKP2addone}. This accounts for the
first term in the recurrence relation.

We now consider the various positions which the strands of the new
$k$-chord could occupy. If they remain to the right of all existing
vertices, we consider them to be at {\it home}. If a strand is not at
home, but is placed such that it does not disturb any existing
short chord, we say that it is in the {\it forest}. If the existing
configuration has $\ell+p$ short chords, there are $kn-(k-1)(\ell+p)$
forest positions.

The second term in the recurrence relation arises from leaving $h\geq 1$
strands at home, and placing the remaining $f=k-h$ strands into
forest positions only, i.e., no strand which is not at home is placed
within an existing short chord. The number of ways of accomplishing
this is the same as the number of ways of placing $f=k-h$ identical
balls into $kn-(k-1)\ell$ distinguishable bins.

The third term in the recurrence relation accounts for the general
case: $h\geq 1$ strands are left at home, $f\geq 0$ occupy forest
positions, and the remainder disturb existing short chords. An example
is shown in Figure~\ref{FigKP2}. The result follows from the
preceding treatment of forest positions (there are now
$kn-(k-1)(\ell+p)$ of these) in conjunction with Lemma~\ref{LemKP2},
where the $j$ balls represent the $k-h-f$ strands which disturb
short chords, and the $p$ bins represent the $p$ disturbed short chords,
each of which can be disturbed in up to $k-1$ positions.
\end{proof}
\begin{figure}[H]
\begin{center}
\includegraphics[bb=0 0 440 68, height=0.5in]{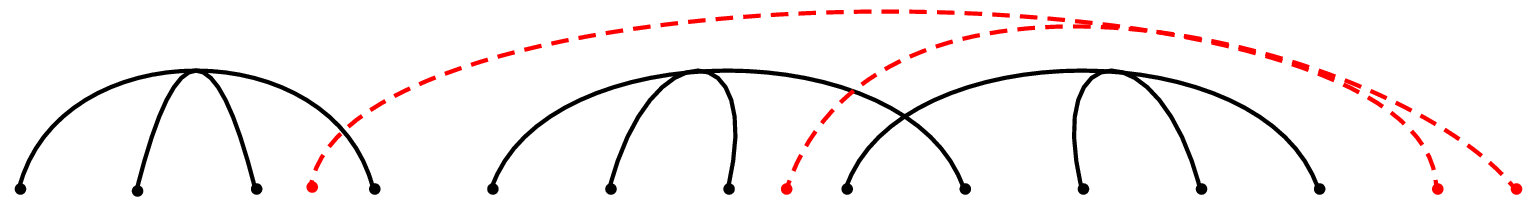}
\end{center}
\caption{A general configuration from Theorem~\ref{ThmKP2}, with
  $k=4$, $n=3$, $\ell=0$, $h=2$, $f=1$, and $p=1$.}
\label{FigKP2}
\end{figure}
\begin{example}
  The recurrence relation for the case $k=3$ is given by
  \begin{align}\nonumber
      d_{n+1,\ell} &= d_{n,\ell-1}\\\nonumber &
      + \frac{(3n-2\ell+3)(3n-2\ell)}{2}\,d_{n,\ell} + (\ell+1) (6
      n-4 \ell+1)\,d_{n,\ell+1} + 2 (\ell+1) (\ell+2)\, d_{n,\ell+2}.
\end{align}
\end{example}

\begin{lemma}\label{LemKP2}
The number of ways of placing $j$ identical balls into a selection of
$p$, out of $\ell+p$ distinguishable bins, each containing $k-1$
distinguishable sub-bins, such that no bin is empty (though any
sub-bin could be), is
\begin{equation}\nonumber
  [x^{j}y^p]\left(1+y-y(1-x)^{1-k}\right)^{-\ell-1}.
\end{equation}
\end{lemma}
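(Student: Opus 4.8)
The plan is to build the claimed generating function one bin at a time, tracking balls with $x$ and occupied (selected) bins with $y$, and then to recognize the resulting product as a negative binomial series.

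First I would record the generating function for a single bin. A bin consists of $k-1$ distinguishable sub-bins, and placing $m$ identical balls among them is a stars-and-bars count, giving $\binom{m+k-2}{k-2}$ arrangements; summing over $m\ge 0$ yields $\sum_{m\ge 0}\binom{m+k-2}{k-2}x^m=(1-x)^{1-k}$. Since a selected bin must be non-empty, I would discard the empty arrangement and set $B(x)=(1-x)^{1-k}-1$, which has vanishing constant term, so that $B(x)^p$ automatically enforces non-emptiness in each of $p$ labeled bins. Consequently $[x^j]B(x)^p$ counts the placements of $j$ balls into $p$ non-empty bins, while permitting individual sub-bins to be empty.

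Next I would handle the choice of which $p$ of the $\ell+p$ bins are occupied. Each bin is either empty, contributing the factor $1$, or selected and non-empty, contributing $y\,B(x)$; hence the number of ways to distribute $j$ balls over exactly $p$ occupied bins chosen from $\ell+p$ is $\binom{\ell+p}{p}[x^j]B(x)^p$, and summing over $p$ gives the bivariate series $\sum_{p\ge 0}\binom{\ell+p}{p}\bigl(y\,B(x)\bigr)^p$. The key observation is that this is precisely the negative binomial expansion $\bigl(1-y\,B(x)\bigr)^{-\ell-1}$, via the identity $\binom{-\ell-1}{p}(-1)^p=\binom{\ell+p}{p}$. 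Substituting $B(x)=(1-x)^{1-k}-1$ rewrites the base as $1-y\,B(x)=1+y-y(1-x)^{1-k}$, which is the stated form, and extracting $[x^j y^p]$ recovers the desired count.

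The main obstacle is conceptual rather than computational: the total number of bins, $\ell+p$, itself depends on the marking variable $p$, so one cannot simply raise $1+y\,B(x)$ to a fixed power. The resolution is to recognize that the negative binomial coefficient $\binom{\ell+p}{p}$ appearing in $\bigl(1-y\,B(x)\bigr)^{-\ell-1}$ encodes exactly the selection of the $p$ occupied bins among $\ell+p$; once this identification is made, the remaining steps reduce to the routine stars-and-bars count for the sub-bins.
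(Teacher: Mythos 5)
Your proposal is correct and follows essentially the same route as the paper's proof: the per-bin generating function $(1-x)^{1-k}-1$ (your $B(x)$, the paper's $f(x)$), the factor $\binom{\ell+p}{p}$ for choosing the occupied bins, and the negative binomial summation $\sum_p \binom{\ell+p}{p}\bigl(y\,B(x)\bigr)^p=\bigl(1-y\,B(x)\bigr)^{-\ell-1}$ are exactly the paper's steps. Your closing remark about the bin count $\ell+p$ varying with $p$ is a nice clarification of what the paper leaves implicit, but it does not change the argument.
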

\begin{proof}
We first note that there are $\ell+p\choose p$ ways to choose the $p$
bins. To enumerate the possible ways of filling the sub-bins we
consider the weak compositions of $1,2,3,\ldots$ into $k-1$ parts. For
a given bin, let $f(x)$ be the generating function such that
$[x^m]f(x)$ counts the number of ways of placing $m$ balls into the
$k-1$ sub-bins. We have that
\begin{equation}\nonumber
  f(x) = {k-1\choose k-2} x + {k\choose k-2} x^2+ {k+1\choose k-2} x^3 +\cdots
  =(1-x)^{1-k}-1.
\end{equation}
To now account for $p$ bins, we take $f(x)^p$, and note that
\begin{equation}\nonumber
\sum_{p} {\ell+p\choose p} f(x)^p y^p =  \left(1+y-y(1-x)^{1-k}\right)^{-\ell-1}.
\end{equation}
\end{proof}

\section{Generating functions}
\label{SecGFs}

In this section we establish formal generating functions enumerating
configurations firstly by number of short chords
(Theorem~\ref{Thmbypolys}),
\begin{equation}\nonumber
    F_k(w,z)=\sum_{n\geq 0}\sum_{\ell=0}^nd_{n,\ell}\, w^n z^\ell=\sum_{j\geq 0}\frac{(kj)!}{j!(k!)^j}
    \frac{w^j}{\left(1+w(1-z)\right)^{kj+1}},
\end{equation}
where the power of $w$ corresponds to $n$ and the power of $z$
corresponds to the number of short chords, and secondly by the number
of connected components (Theorem~\ref{Thmbycomp})
 \begin{equation}\nonumber
 C_k(y,z)=\sum_{n,q\geq 0}c_{n,q}\, y^n z^q=   
  \sum_{j\geq 0}\frac{(kj)!}{j!(k!)^j}\,
  y^j\left(\frac{1-y(1-z)}{1-y^2(1-z)}\right)^{kj+1},
 \end{equation}
 where the power of $y$ corresponds to $n$ and the power of $z$
 corresponds to the number of connected components. These generating
 functions are not convergent, but nevertheless provide compact
 expressions for the numbers they count.

\subsection{Counting by number of short chords}
  
\begin{proposition}\label{PrpL}
The generating function which counts the number of ways of choosing
$j$ pairwise non-overlapping subpaths, each of length $k$, from a path
of length $\ell$ is
\begin{equation}\nonumber
L_k(x,y) = \frac{1}{1-y(1+x^ky^{k-1})},
\end{equation}
where the power of $x$ corresponds to the total number of vertices in
all chosen sub-paths, and the power of $y$ corresponds to the length
of the path.
\end{proposition}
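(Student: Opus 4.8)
The plan is to build each configuration---a path of length $\ell$ carrying $j$ marked, pairwise non-overlapping subpaths of length $k$---as an ordered sequence of \emph{atoms}, and then read off the generating function via the sequence construction of the symbolic method. Scanning the path from left to right, every vertex is either unmarked or belongs to exactly one marked subpath; grouping consecutive vertices accordingly partitions the path uniquely into blocks of two types: a single unmarked vertex, or one of the chosen length-$k$ subpaths. Conversely, concatenating any finite sequence of such blocks yields a unique path of the required form. This gives a bijection between configurations and finite sequences of atoms, and it is really the only point that requires care: the decomposition is unambiguous precisely because the marked subpaths are pairwise non-overlapping, so the blocks tile the path with no overlap and no gap, and the total length is the sum of the block lengths.

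Next I would assign the bivariate weight tracking the two statistics. A single unmarked vertex contributes $1$ to the length and no vertex to the marked total, hence weight $y$; a marked subpath contributes $k$ to the length and $k$ vertices to the marked total, hence weight $x^k y^k$. The generating function of a single atom is therefore $y + x^k y^k = y\left(1 + x^k y^{k-1}\right)$, where the exponent of $y$ records the length and the exponent of $x$ records the number of vertices lying in chosen subpaths.

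Applying the sequence construction to the bijection above then gives
\[
  L_k(x,y) = \frac{1}{1 - \left(y + x^k y^k\right)} = \frac{1}{1 - y\left(1 + x^k y^{k-1}\right)},
\]
as claimed. As a consistency check against Lemma~\ref{Lemchpath}, expanding the geometric series yields
\[
  \frac{1}{1 - \left(y + x^k y^k\right)} = \sum_{m \geq 0}\left(y + x^k y^k\right)^m = \sum_{m \geq 0}\sum_{j=0}^{m}\binom{m}{j}\, x^{kj}\, y^{\,m + (k-1)j},
\]
so the coefficient of $x^{kj} y^{\ell}$ is obtained by setting $m = \ell - (k-1)j$, giving $\binom{\ell - j(k-1)}{j}$, in exact agreement with Lemma~\ref{Lemchpath}. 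The computations in this last step are routine; the substantive content is entirely in establishing that the left-to-right reading is a bijection, which is what legitimizes the passage from the per-atom generating function to the closed form.
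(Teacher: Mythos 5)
Your proof is correct, but it runs in the opposite logical direction from the paper's. The paper treats the proposition as a repackaging of Lemma~\ref{Lemchpath}: it expands the claimed closed form as a geometric series, reads off $[x^{kj}y^{\ell}]L_k(x,y) = \binom{\ell - j(k-1)}{j}$, and declares the result proven because that binomial coefficient is exactly the count already established in Lemma~\ref{Lemchpath} (whose own proof uses the collapse-each-subpath-to-a-marked-vertex bijection). You instead derive the generating function from first principles: you decompose each configuration uniquely into a sequence of atoms (an unmarked vertex of weight $y$, or a marked length-$k$ subpath of weight $x^k y^k$) and invoke the sequence construction, using the coefficient expansion only as a consistency check. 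The two arguments rest on essentially the same combinatorial decomposition --- your block tiling is the inverse of the paper's collapsing map --- but your version is self-contained and in fact reproves Lemma~\ref{Lemchpath} as a byproduct, whereas the paper's is shorter because it leans on the lemma already being available. Your approach also fits more naturally with how $L_k$ is later used (e.g., the symbolic-method manipulations in Lemma~\ref{Lemroq} and Theorem~\ref{Thmbycomp}), since it exhibits $L_k$ directly as a sequence construction rather than as a formula that happens to have the right coefficients.
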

\begin{proof}
By direct expansion $L_k(x,y) = \sum_{p\geq 0}\sum_{j=0}^p {p\choose
  j}x^{kj} \,y^{(k-1)j+p}$, so $[x^{kj}y^{\ell}]L_k(x,y) = {\ell -
  j(k-1)\choose j}$. By Lemma~\ref{Lemchpath}, the proposition is
proven.
\end{proof}

\begin{lemma}\label{Lemmethod}
  The generating function which counts the number of linear $k$-chord
  diagrams of length $kn$ with at least $j$ short chords is given by
  \begin{equation}\nonumber
    N_k(w^k,z^k)=\int_0^\infty dt \,e^{-t}\,\frac{1}{2\pi i}
    \oint_{|x|=\epsilon} \frac{dx}{x}\, e^{x^k/k!} \,
    L_k\left(\frac{zx}{t},\frac{wt}{x}\right),
    \end{equation}
where $[w^n z^j] \,N_k(w,z)$ is the number of linear $k$-chord diagrams of
length $kn$ with at least $j$ short chords.
\end{lemma}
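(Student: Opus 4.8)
The plan is to reduce the statement to Lemma~\ref{Lematleast} by reading the integral representation as a device that manufactures the two factors appearing there: the multinomial placement count ${\cal N}_{k,n-j}$ of the remaining chords, and the binomial coefficient counting the short-chord positions supplied by $L_k$. Throughout I would treat $t$ and $x$ as purely formal bookkeeping variables, so that the $t$-integral and the contour $\frac{1}{2\pi i}\oint_{|x|=\epsilon}\frac{dx}{x}$ are a Laplace/Gamma weight and a formal coefficient extraction, respectively.

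First I would insert the explicit series. From the proof of Proposition~\ref{PrpL} we have $L_k(X,Y)=\sum_{p\geq 0}\sum_{j=0}^p\binom{p}{j}X^{kj}Y^{(k-1)j+p}$, and substituting $X=zx/t$, $Y=wt/x$ collects each term as $\binom{p}{j}z^{kj}\,w^{(k-1)j+p}\,x^{j-p}\,t^{p-j}$. Multiplying by $e^{x^k/k!}=\sum_{m\geq 0}x^{km}/\bigl((k!)^m m!\bigr)$ shifts the power of $x$ to $km+j-p$ and contributes the prefactor $1/\bigl((k!)^m m!\bigr)$.

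Next I would carry out the two integrations. Since $\frac{1}{2\pi i}\oint_{|x|=\epsilon}\frac{dx}{x}(\cdots)$ extracts the coefficient of $x^0$, it forces $km+j-p=0$, i.e.\ $p=km+j$ (which automatically satisfies $j\le p$ and $p\ge 0$ for $m\ge 0$); after this substitution the power of $t$ becomes $km$ and the power of $w$ becomes $k(j+m)$. One then reads $\frac{1}{2\pi i}\oint\frac{dx}{x}e^{x^k/k!}L_k=\sum_{m,j}\frac{1}{(k!)^m m!}\binom{km+j}{j}z^{kj}w^{k(j+m)}t^{km}$. The remaining integral is a Gamma integral, $\int_0^\infty e^{-t}t^{km}\,dt=(km)!$, which combines with the prefactor to give precisely ${\cal N}_{k,m}=(km)!/\bigl((k!)^m m!\bigr)$.

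Finally I would extract the target coefficient. Reading off $[w^{kn}z^{kj}]$ of $N_k(w^k,z^k)$ — equivalently $[w^n z^j]N_k(w,z)$ — forces $j+m=n$, hence $m=n-j$, and rewriting $\binom{km+j}{j}=\binom{k(n-j)+j}{j}=\binom{kn-j(k-1)}{j}$ yields exactly $\frac{(k(n-j))!}{(k!)^{n-j}(n-j)!}\binom{kn-j(k-1)}{j}$, which is the count of diagrams with at least $j$ short chords by Lemma~\ref{Lematleast}. I do not anticipate a deep obstacle here; the only point requiring care is justifying the term-by-term interchange of the two integrals with the formal summations. The representation is essentially a Borel-type device in which the $t$-integral converts the exponential-type weight $1/\bigl((k!)^m m!\bigr)$ into the ordinary weight ${\cal N}_{k,m}$, while the contour extraction couples the $L_k$-index $p$ to the $e^{x^k/k!}$-index $m$; one checks that each monomial $w^{kn}z^{kj}$ receives a contribution from exactly one pair $(m,p)=(n-j,\,k(n-j)+j)$, so the coefficient extraction is unambiguous and the identification with Lemma~\ref{Lematleast} is immediate.
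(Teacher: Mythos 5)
Your proposal is correct and follows essentially the same route as the paper: both unpack the integral representation term by term, use the contour integral to couple the power of $x$ from $L_k$ to the expansion of $e^{x^k/k!}$ (yielding the factor $1/\bigl((k!)^{n-j}(n-j)!\bigr)$), use the Gamma integral $\int_0^\infty e^{-t}t^{km}\,dt=(km)!$ to produce the factorial, and identify the resulting coefficient $\frac{(k(n-j))!}{(k!)^{n-j}(n-j)!}\binom{kn-j(k-1)}{j}$ with Lemma~\ref{Lematleast}. The only difference is cosmetic — you expand $L_k$ as an explicit double sum and perform the contour extraction before fixing $n$ and $j$, whereas the paper extracts $[w^{kn}z^{kj}]$ first via Lemma~\ref{Lemchpath} — and this does not change the substance of the argument.
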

\begin{proof}
  We begin by noting that
  \begin{align}\nonumber
    [w^{kn} z^{kj}] \int_0^\infty dt \,e^{-t}\,
    L_k\left(\frac{zx}{t},\frac{wt}{x}\right) &=\int_0^\infty dt
    \,e^{-t}\,t^{kn-kj}{kn - j(k-1)\choose j} x^{kj-kn}\\\nonumber
    &= (kn-kj)! {kn
      - j(k-1)\choose j} x^{kj-kn},
  \end{align}
  where we have made use of Lemma~\ref{Lemchpath}. We now note that
  \begin{equation}\nonumber
    [x^{kn-kj}]e^{x^k/k!} =\frac{1}{(k!)^{n-j}\,(n-j)!},
  \end{equation}
  and so the contour integral selects precisely this term in the
  expansion of $e^{x^k/k!}$. We therefore have that
\begin{align}\nonumber
     [w^{kn} z^{kj}]\int_0^\infty dt \,e^{-t}\,\frac{1}{2\pi i}
    \oint_{|x|=\epsilon} \frac{dx}{x}\, e^{x^k/k!} \,
    L_k\left(\frac{zx}{t},\frac{wt}{x}\right)  
    =\frac{(kn-kj)!}{(k!)^{n-j}\,(n-j)!} {kn
      - j(k-1)\choose j},
  \end{align}
  and by Lemma~\ref{Lematleast} the proof is complete.
\end{proof}

\begin{theorem}\label{Thmbypolys}
The generating function which counts the number of linear $k$-chord diagrams
of length $kn$ with exactly $\ell$ short chords is given by
\begin{equation}\nonumber
  F_k(w,z)=
  \sum_{j\geq 0}\frac{(kj)!}{j!(k!)^j}
  \frac{w^j}{\left(1+w(1-z)\right)^{kj+1}},
\end{equation}
where the power of $w$ corresponds to $n$ and the power of $z$
corresponds to $\ell$.
\end{theorem}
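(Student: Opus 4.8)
The plan is to pass from the "at least $j$ short chords" count to the "exactly $\ell$" count via the standard inclusion-exclusion substitution, and to recognize the resulting double sum as the claimed closed form. I would start from Theorem~\ref{ThmCF}, which gives
\[
d_{n,\ell}=\frac{1}{\ell!}\sum_{j=\ell}^n\frac{(k(n-j)+j)!\,(-1)^{j-\ell}}{(k!)^{n-j}\,(n-j)!\,(j-\ell)!},
\]
and substitute this directly into the defining double sum $F_k(w,z)=\sum_{n\geq 0}\sum_{\ell=0}^n d_{n,\ell}\,w^n z^\ell$. The natural first move is to reindex: write $m=n-j$ for the number of non-short chords remaining after $j$ short chords are marked, so that the factor $(k(n-j)+j)!/((k!)^{n-j}(n-j)!)=(km+j)!/((k!)^m\,m!)$ depends on $j$ and $m$ separately, and the triple sum over $n,\ell,j$ becomes a sum over $m\geq 0$, $j\geq 0$, and $\ell$ with $0\le \ell\le j$.

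Next I would collect the $z$ and $\ell$ dependence. After the substitution the only $\ell$-dependent pieces are $z^\ell$ and the factor $1/(\ell!\,(j-\ell)!)$ together with the sign $(-1)^{j-\ell}$; summing over $\ell$ from $0$ to $j$ gives, by the binomial theorem, $\frac{1}{j!}\sum_{\ell=0}^j\binom{j}{\ell}z^\ell(-1)^{j-\ell}=\frac{(z-1)^j}{j!}$. This is the key simplification: it removes $\ell$ entirely and produces the single factor $(z-1)^j/j!$. With this in hand, and using $n=m+j$ so that $w^n=w^m w^j$, the generating function factors into
\[
F_k(w,z)=\sum_{j\geq 0}\frac{(z-1)^j}{j!}\,w^j\sum_{m\geq 0}\frac{(km+j)!}{(k!)^m\,m!}\,w^m.
\]
The remaining task is to evaluate the inner sum over $m$ in closed form.

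The inner sum is where the real work lies, and I expect it to be the main obstacle. I would evaluate $\sum_{m\geq 0}\frac{(km+j)!}{(k!)^m\,m!}\,w^m$ using the integral representation $(km+j)!=\int_0^\infty t^{km+j}e^{-t}\,dt$, which turns the sum into $\int_0^\infty t^j e^{-t}\sum_{m\geq 0}\frac{(t^k w/k!)^m}{m!}\,dt=\int_0^\infty t^j e^{-t}\exp(t^k w/k!)\,dt$; alternatively one could use the Beta-type identity $\frac{(km+j)!}{(k!)^m m!}$ directly. The cleaner route, however, is to recall that $\frac{(kj)!}{j!(k!)^j}={\cal N}_{k,j}$ and to verify the claimed identity $\sum_{m\geq 0}\frac{(km+j)!}{(k!)^m m!}w^m=\frac{(kj)!/j!}{(1+w(1-z))^{\cdots}}$-type form by matching against the target. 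Concretely, after pulling out $(z-1)^j$ I would show $\sum_{m\ge 0}\frac{(km+j)!}{(k!)^m m!}w^m = (kj)!\,j!^{-1}\,(1-(\,\cdot\,))^{-(kj+1)}$ evaluated so that the $j$-th term of $F_k$ becomes $\frac{(kj)!}{j!(k!)^j}\frac{w^j}{(1+w(1-z))^{kj+1}}$; this amounts to establishing $\sum_{m\geq 0}\frac{(km+j)!}{(k!)^m m!}\big(\tfrac{w}{1+w(1-z)}\big)^{?}$ reproduces the geometric-type expansion, which I would verify by expanding $(1+w(1-z))^{-(kj+1)}=\sum_{m\ge0}\binom{-(kj+1)}{m}(w(1-z))^m$ and comparing coefficients, using $(z-1)^j=(-1)^j(1-z)^j$ to reconcile signs. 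I would present this coefficient comparison as the crux, deferring the routine binomial-coefficient bookkeeping, and noting that the Gamma-integral identity from Lemma~\ref{Lemmethod} can be reused to shortcut the summation if a direct approach proves cumbersome.
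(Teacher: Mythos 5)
Your overall strategy is sound and genuinely different from the paper's. The paper never touches the closed form of Theorem~\ref{ThmCF} in this proof; instead it builds $F_k$ from the integral representation of Lemma~\ref{Lemmethod} (a Gamma-integral in $t$ times a contour integral in $x$ wrapping Proposition~\ref{PrpL}'s rational generating function $L_k$), evaluates the residue at $x=tw/(1-(wz)^k)$, integrates term by term to produce the $(kj)!$ factors, and only at the end substitutes $z\to z-1$ per Lemma~\ref{Lemnearly}. You instead sum Theorem~\ref{ThmCF} directly: the reindexing $m=n-j$ and the binomial theorem in $\ell$ correctly give
\begin{equation}\nonumber
F_k(w,z)=\sum_{j\geq 0}\sum_{m\geq 0}\frac{(km+j)!}{(k!)^m\,m!\,j!}\,w^{j+m}(z-1)^j,
\end{equation}
and this is a legitimate formal-power-series computation, since for each fixed power of $w$ only finitely many pairs $(j,m)$ contribute. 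Your route is more elementary and self-contained; the paper's heavier machinery pays off later, as essentially the same integral device is reused to obtain the connected-component generating function of Theorem~\ref{Thmbycomp}, which your method would not produce as readily.

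The one wrong turn is your treatment of the ``inner sum.'' You cannot evaluate $\sum_{m\geq 0}\frac{(km+j)!}{(k!)^m m!}w^m$ in closed form so that the $j$-th term of your decomposition becomes the $j$-th term of the target: that inner sum does not depend on $z$ at all (and is divergent for $k\geq 2$), so no identity of the sort you sketch can hold term by term in $j$. The two series are rearrangements of one another with the indices \emph{crossed}: your $j$ (the power of $(z-1)$) plays the role of the binomial-expansion index in the target, and your $m$ plays the role of the target's outer index. Concretely, expand the target as
\begin{equation}\nonumber
\sum_{J\geq 0}\frac{(kJ)!}{J!\,(k!)^J}\,w^J\sum_{M\geq 0}\binom{kJ+M}{M}\,w^M(z-1)^M,
\end{equation}
and extract the coefficient of $w^{j+m}(z-1)^j$: this forces $M=j$ and $J=m$, giving
\begin{equation}\nonumber
\frac{(km)!}{m!\,(k!)^m}\binom{km+j}{j}=\frac{(km+j)!}{(k!)^m\,m!\,j!},
\end{equation}
which is exactly your coefficient. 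So the crux is a one-line binomial identity between the regrouped double sums, not a closed-form evaluation of a divergent series; with that substituted for your final paragraph (and the Gamma-integral detour dropped as unnecessary), your proof is complete.
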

\begin{proof}
  We use Lemma~\ref{Lemmethod} to find the generating function for at
  least $j$ short chords, and then replacing $z\to z-1$ at the end, as
  per Lemma~\ref{Lemnearly}, yields the desired result. We note that
  \begin{equation}\nonumber
  \frac{1}{2\pi i}
  \oint_{|x|=\epsilon} \frac{dx}{x}\, e^{x^k/k!}
  L_k\left(\frac{zx}{t},\frac{wt}{x}\right)  =
   \frac{1}{2\pi i}
   \oint_{|x|=\epsilon}dx\,  \frac{e^{x^k/k!}}{x\left(1-(wz)^k\right)-tw},
  \end{equation}
  where we have made use of Proposition~\ref{PrpL}. We proceed by
  evaluating the residue at $x=tw/(1-(wz)^k)$
  \begin{align}\nonumber
    N_k(w^k,z^k) &= \int_0^\infty dt \,e^{-t}\,
    \frac{1}{1-(wz)^k}
    \exp\left(\frac{t^kw^k}{k!\left(1-(wz)^k\right)^k}\right)\\\nonumber
     &= \int_0^\infty dt \,e^{-t}\,
    \frac{1}{1-(wz)^k}
    \sum_{j\geq 0}\frac{1}{j!}\left(\frac{t^kw^k}{k!\left(1-(wz)^k\right)^k}\right)^j,
    \end{align}
  where the integration over $t$ is understood to be performed
  term-by-term in the expansion of the exponential, thus yielding a
  factor of $(kj)!$. Note that $w$ and $z$ appear uniformly with
  exponent $k$; we can therefore remove the exponents by considering
  $N_k(w,z)$ in place of $N_k(w^k,z^k)$. Finally, we have that
  \begin{equation}\nonumber
    F_k(w,z)=N_k(w,z-1) =  \sum_{j\geq 0}\frac{(kj)!}{j!(k!)^j}
  \frac{w^j}{\left(1+w(1-z)\right)^{kj+1}}.
     \end{equation}
\end{proof}

\subsection{Counting by number of connected components}

We now turn our attention to counting configurations by the number of
connected components, as defined in Definition~\ref{Defcomp}. Let
there be $q$ connected components; it is clear that there are
therefore at least $q-1$ and at most $q+1$ regions devoid of short
chords, depending on whether the first and last vertices are occupied
by short chords or not. We will require the number $\rho_j$ of ways of
choosing $j$ non-overlapping sub-paths from this disjoint collection of
regions. Proposition~\ref{Prpzero} will then count the number of
``zero-short-chord'' configurations on these regions.

\begin{lemma}
The numbers which count $\rho_j$ (cf.\  Proposition~\ref{Prpzero})
for the case of a single component of size $km$, where $m\geq 1$, are
denoted $\rho^{(1)}_j(m)$. We have that
\begin{equation}\nonumber
  \rho^{(1)}_j(m) = [x^{kj}y^{kn-km}]L_k(x,y)^2.
\end{equation}
\end{lemma}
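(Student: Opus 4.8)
The lemma to prove states that $\rho^{(1)}_j(m) = [x^{kj}y^{kn-km}]L_k(x,y)^2$, where $\rho^{(1)}_j(m)$ counts the number of ways of choosing $j$ non-overlapping sub-paths (each of length $k$) from the disjoint collection of regions that remain when a single connected component of size $km$ sits somewhere in a path of length $kn$.

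Let me understand the combinatorial setup. We have a path of total length $kn$. There's one connected component of short chords occupying $km$ consecutive vertices. This component can sit anywhere. When it sits at some position, it splits the remaining $kn - km$ vertices into (up to) two regions: a left region and a right region. The quantity $\rho_j$ from Proposition~\ref{Prpzero} counts ways to choose $j$ disjoint sub-paths of length $k$ from these "zero-short-chord" regions.

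So $\rho^{(1)}_j(m)$ is summing over all positions of the component, and for each position, counting the ways to place $j$ non-overlapping length-$k$ subpaths in the two leftover regions.

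**The key insight: factoring into two regions.**

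By Proposition~\ref{PrpL}, $L_k(x,y)$ is the generating function where $[x^{kj}y^\ell]L_k = \binom{\ell - j(k-1)}{j}$ counts ways to place $j$ non-overlapping length-$k$ subpaths in a path of length $\ell$.

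Now $L_k(x,y)^2$ represents two independent regions (left and right). The coefficient $[x^{kj}y^L]L_k(x,y)^2$ would count: choose total of $j$ subpaths distributed between two regions whose lengths sum to $L$, summed over all ways to split the length $L$ and the count $j$ between the two regions.

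The total leftover length is $kn - km$. But wait — we need the lengths of the two regions (left and right of the component) to sum to $kn - km$, AND we need to sum over all positions of the component. Summing over all positions of the component is exactly the same as summing over all ways to split $kn - km$ into a left-length $\ell_L \geq 0$ and right-length $\ell_R \geq 0$ with $\ell_L + \ell_R = kn - km$.

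So: the component position determines $(\ell_L, \ell_R)$ with $\ell_L + \ell_R = kn - km$. For each such split, the number of ways to place $j$ subpaths total is $\sum_{a+b=j}[x^{ka}y^{\ell_L}]L_k \cdot [x^{kb}y^{\ell_R}]L_k$. Summing over all positions (all valid $\ell_L, \ell_R$) and all ways to distribute $j = a+b$ gives exactly $[x^{kj}y^{kn-km}]L_k(x,y)^2$.

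**Proof plan.**

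\begin{proof}[Proof sketch]
The plan is to interpret $L_k(x,y)^2$ as the generating function for choosing non-overlapping length-$k$ subpaths in a pair of independent regions, and to identify these two regions with the left and right remnants flanking the single connected component.

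First I would fix the position of the connected component. Since it occupies $km$ consecutive vertices within the path of length $kn$, its placement is determined by the length $\ell_L \geq 0$ of the region to its left, with the region to its right then having length $\ell_R = kn - km - \ell_L \geq 0$. Summing $\rho_j$ over all positions of the component is therefore the same as summing over all pairs $(\ell_L, \ell_R)$ of nonnegative integers with $\ell_L + \ell_R = kn - km$.

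Next, for a fixed position, $\rho_j$ counts the ways of choosing $j$ non-overlapping length-$k$ subpaths drawn from the disjoint union of the left and right regions. Since these two regions are separated by the component, any such choice partitions into $a$ subpaths in the left region and $b$ subpaths in the right region with $a + b = j$. By Proposition~\ref{PrpL}, the number of ways to place $a$ subpaths in a region of length $\ell_L$ is $[x^{ka}y^{\ell_L}]L_k(x,y)$, and similarly for the right region. Hence the contribution from a fixed position is $\sum_{a+b=j}[x^{ka}y^{\ell_L}]L_k(x,y)\cdot[x^{kb}y^{\ell_R}]L_k(x,y)$.

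Finally I would sum over all component positions and invoke the Cauchy product. Summing the displayed expression over all $(\ell_L,\ell_R)$ with $\ell_L+\ell_R = kn-km$, and recognizing the resulting double convolution in both the $x$- and $y$-exponents as the coefficient extraction from the product $L_k(x,y)\cdot L_k(x,y)$, yields
\[
\rho^{(1)}_j(m) = \sum_{\ell_L+\ell_R=kn-km}\ \sum_{a+b=j}[x^{ka}y^{\ell_L}]L_k\cdot[x^{kb}y^{\ell_R}]L_k = [x^{kj}y^{kn-km}]L_k(x,y)^2,
\]
as claimed.
\end{proof}

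The main obstacle I anticipate is bookkeeping the boundary conventions cleanly: one must be careful that $\ell_L = 0$ or $\ell_R = 0$ (the component touching an end of the path) is correctly included, and that $L_k$ at $y$-degree $0$ contributes the empty choice $[x^0 y^0]L_k = 1$. Proposition~\ref{PrpL} already encodes these edge cases through the constant term of $L_k$, so the argument goes through, but I would verify that the total $y$-degree $kn - km$ matches the definition of $\rho^{(1)}_j(m)$ as a function of both $m$ and the ambient diagram length, making explicit that $n$ appears in the statement through the total available length $kn - km$.
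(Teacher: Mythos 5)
Your proof is correct and takes essentially the same approach as the paper, whose own proof is just the one-line remark that the result follows from Proposition~\ref{PrpL} with the sum over component positions ``accounted for through the symbolic method.'' Your left/right-region decomposition and Cauchy-product argument is precisely the detailed unpacking of that symbolic-method step, including the correct handling of the boundary cases $\ell_L=0$ or $\ell_R=0$.
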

\begin{proof}
The result follows from Proposition~\ref{PrpL}. The sum over the
position of the connected component is accounted for through the
symbolic method.
\end{proof}
Each further component which is added produces a new region, which,
rather than being bounded on one side by the ends of the $k$-chord diagram, is
bounded by two connected components. It is clear that these new
regions must not be allowed to have zero length; this is accomplished
by subtracting 1 from $L_k(x,y)$.
\begin{lemma}\label{Lemroq}
The numbers $\rho^{(q)}_j(m_1,\ldots,m_q)$ which count $\rho_j$ for the case of $q$
connected components, of sizes $km_1,\ldots, km_q$, summed over
positions, is
\begin{equation}\nonumber
\rho^{(q)}_j(m_1,\ldots,m_q) = [x^{kj}y^{kn-k\sum m_p}]\, L_k(x,y)^2 
\left(L_k(x,y) - 1\right)^{q-1}.
\end{equation}
\end{lemma}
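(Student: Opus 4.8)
The lemma generalizes the single-component case (Lemma preceding it) from $q=1$ to arbitrary $q$. The plan is to build up the configuration of $q$ connected components together with their surrounding zero-short-chord regions, and to extract the relevant coefficient from a product of copies of $L_k(x,y)$. The key conceptual point, already foreshadowed in the text preceding the statement, is that $q$ connected components of short chords partition the remaining $kn-k\sum_p m_p$ vertices into a collection of regions on which the $\rho_j$ sub-paths must be placed, and that these regions come in two flavours: the two outer regions (to the left of the first component and to the right of the last), which are allowed to be empty, and the $q-1$ inner regions (each sandwiched between two consecutive components), which must be nonempty.

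**The main steps.**

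First I would recall from Proposition~\ref{PrpL} that $L_k(x,y)$ is the generating function in which $[x^{kj}y^{\ell}]L_k(x,y)$ counts the ways of choosing $j$ pairwise non-overlapping length-$k$ subpaths from a path of length $\ell$, with $x$ marking total vertices used and $y$ marking the length of the ambient path. Since the relevant sub-paths are chosen independently across the disjoint regions, the generating function for a disjoint union of regions is simply the \emph{product} of the generating functions of the individual regions, with $x$ and $y$ bookkeeping additively across factors. I would then argue that the two outer regions each contribute a factor of $L_k(x,y)$ (they may be empty, and $L_k(x,y)$ has constant term $1$ corresponding to the empty region of length zero), giving $L_k(x,y)^2$. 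Each of the $q-1$ inner regions contributes a factor of $L_k(x,y)-1$, where the subtraction of the constant term $1$ enforces that such a region has strictly positive length, exactly as explained in the remark immediately before the statement. Multiplying these together yields $L_k(x,y)^2\bigl(L_k(x,y)-1\bigr)^{q-1}$.

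Second, I would account for the coefficient extraction. The total path length carried by the $y$-variable across all regions must be the number of vertices \emph{not} occupied by the components, namely $kn-k\sum_p m_p$; hence we read off $[y^{kn-k\sum m_p}]$. The variable $x$ tracks the total number of vertices used by the chosen sub-paths across all regions, so selecting $j$ sub-paths in total corresponds to $[x^{kj}]$. This reproduces exactly the stated coefficient extraction from $L_k(x,y)^2\bigl(L_k(x,y)-1\bigr)^{q-1}$. Finally I would note that summing over positions of the components is automatic in the symbolic method: the lengths of all regions are free non-negative (or positive, for inner regions) integers constrained only by the total $y$-degree, and each distribution of region lengths corresponds to one placement of the $q$ components among the vertices.

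**The main obstacle.**

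The only genuinely delicate point is the bookkeeping at the boundaries between regions and components, i.e.\ justifying precisely which regions may be empty and which may not. One must verify that adjacent components are genuinely separated by a nonempty gap — otherwise they would merge into a single larger component, contradicting the hypothesis that there are exactly $q$ components of the prescribed sizes — whereas the two outer regions carry no such constraint. This is exactly the content of the $(L_k-1)^{q-1}$ factor versus the $L_k^2$ factor, so the proof of the lemma should consist mainly of making this boundary analysis rigorous and confirming that the product structure correctly and without double-counting enumerates all placements.
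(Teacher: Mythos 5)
Your proposal is correct and follows the same route as the paper: the paper's proof is simply an appeal to the symbolic method, with the surrounding text supplying exactly the decomposition you describe — two end regions (possibly empty) contributing $L_k(x,y)^2$ and $q-1$ internal gaps forced to be nonempty via the factor $\left(L_k(x,y)-1\right)^{q-1}$. Your write-up just makes explicit the bookkeeping that the paper leaves implicit.
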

\begin{proof}
This follows from the symbolic method.
\end{proof}

\begin{lemma}\label{Lemmethod2}
 \begin{align}\nonumber
      \sum_{j=0}^{\tilde n} (-1)^j &{\cal N}_{k,\tilde n - j}\,
      \rho^{(q)}_j(m_1,\ldots,m_q)\\\nonumber
      &=[y^{k\tilde n}]\int_0^\infty dt
      \,e^{-t}\,\frac{1}{2\pi i} \oint_{|x|=\epsilon} \frac{dx}{x}\,
      e^{x^k/k!} \,  L_k\left( \frac{xe^{i\pi/k}}{t},\frac{yt}{x}\right)^2
      \left( L_k\left(\frac{xe^{i\pi/k}}{t},\frac{yt}{x}\right) -
      1\right)^{q-1},
    \end{align}
where $\tilde n = n - \sum m_p$.
\end{lemma}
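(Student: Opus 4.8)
The plan is to run the same contour-integral computation that proves Lemma~\ref{Lemmethod}, with two modifications: the single factor $L_k$ is replaced throughout by the product $L_k^2(L_k-1)^{q-1}$ supplied by Lemma~\ref{Lemroq}, and the alternating signs $(-1)^j$ of the inclusion--exclusion sum of Proposition~\ref{Prpzero} are produced automatically by the substitution $x\mapsto xe^{i\pi/k}/t$. The mechanism is the following. By Proposition~\ref{PrpL} the first argument of $L_k$ enters only through powers that are multiples of $k$, so writing $G(u,v)=L_k(u,v)^2(L_k(u,v)-1)^{q-1}=\sum_{a\ge0}\sum_b g_{a,b}\,u^{ka}v^b$, every monomial carries $u^{ka}$. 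Since $e^{i\pi/k}$ is a $k$-th root of $-1$, we have $(e^{i\pi/k})^{ka}=e^{i\pi a}=(-1)^a$; thus specializing the short-chord tracking variable $z$ of Lemma~\ref{Lemmethod} to the constant $e^{i\pi/k}$ turns $u^{ka}$ into $(-1)^a x^{ka}t^{-ka}$, which is exactly what is needed to realize the signed sum of Proposition~\ref{Prpzero}.

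First I would record, using Lemma~\ref{Lemroq} together with the identity $kn-k\sum_p m_p=k\tilde n$, that the coefficient $g_{a,k\tilde n}$ equals $\rho^{(q)}_a(m_1,\dots,m_q)$. Next I would substitute $u=xe^{i\pi/k}/t$ and $v=yt/x$ term by term, so that a typical monomial of $G$ becomes $g_{a,b}(-1)^a x^{ka-b}y^b t^{b-ka}$, and extract $[y^{k\tilde n}]$; this forces $b=k\tilde n$ and leaves $\sum_a g_{a,k\tilde n}(-1)^a x^{ka-k\tilde n}t^{k\tilde n-ka}$. Performing the $t$-integration term by term gives $\int_0^\infty e^{-t}t^{k\tilde n-ka}\,dt=(k(\tilde n-a))!$, and the contour integral selects $[x^{k(\tilde n-a)}]e^{x^k/k!}=1/((k!)^{\tilde n-a}(\tilde n-a)!)$. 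Multiplying these two factors against $(-1)^a g_{a,k\tilde n}$ and recognizing $\tfrac{(k(\tilde n-a))!}{(k!)^{\tilde n-a}(\tilde n-a)!}={\cal N}_{k,\tilde n-a}$ reproduces the summand $(-1)^a\,{\cal N}_{k,\tilde n-a}\,\rho^{(q)}_a(m_1,\dots,m_q)$, and summing over $a$ yields the left-hand side.

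Two points need care. The integral $\int_0^\infty e^{-t}t^{k\tilde n-ka}\,dt$ diverges for $a>\tilde n$ and so has no value in isolation; as in Lemma~\ref{Lemmethod}, the cure is that the subsequent contour integral returns $[x^{k(\tilde n-a)}]e^{x^k/k!}=0$ whenever $\tilde n-a<0$, so these terms vanish and the surviving sum runs only over $0\le a\le\tilde n$, matching the finite range on the left. The second point, which I expect to be the main obstacle, is the legitimacy of interchanging the three operations---extraction of the coefficient of $y^{k\tilde n}$, integration against $e^{-t}\,dt$, and the contour integral in $x$---and of applying them term by term to the formal series $G$. I would handle this exactly as is done implicitly in the proof of Lemma~\ref{Lemmethod}: isolate a single monomial $g_{a,b}u^{ka}v^b$, track it through all three operations as above, and conclude by linearity, the contour integral guaranteeing that only finitely many monomials survive for each fixed power of $y$.
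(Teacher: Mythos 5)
Your proposal is correct and follows essentially the same route as the paper's proof: expand $L_k^2(L_k-1)^{q-1}$ termwise, use Lemma~\ref{Lemroq} to identify $[x^{ka}y^{k\tilde n}]$ with $\rho^{(q)}_a$, let the $e^{i\pi/k}$ factor produce $(-1)^a$ since the first argument appears only in $k$-th powers, and then let the $t$-integral and $x$-contour integral manufacture $(k(\tilde n-a))!$ and $1/((k!)^{\tilde n-a}(\tilde n-a)!)$, i.e., ${\cal N}_{k,\tilde n-a}$. Your added care about the order of operations and the vanishing of terms with $a>\tilde n$ is sound (and in fact those terms never arise, since $g_{a,k\tilde n}=0$ for $ka>k\tilde n$); the paper treats these points implicitly.
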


\begin{proof}
  The scaling of the $x$ and $y$ variables, together with the result
  of Lemma~\ref{Lemroq}, imply that
 \begin{align}\nonumber
    [y^{k\tilde n}] L_k\left( \frac{xe^{i\pi/k}}{t},\frac{yt}{x}\right)^2&
      \left( L_k\left(\frac{xe^{i\pi/k}}{t},\frac{yt}{x}\right) -
      1\right)^{q-1}\\\nonumber
      &= \sum_{j=0}^{\tilde n}(-1)^j x^{kj-k\tilde n}\,t^{k\tilde n - kj}
      \rho^{(q)}_j(m_1,\ldots,m_q).
       \end{align}
  Note that the integration over $t$ then enacts the replacement
  $t^{k\tilde n - kj}\to (k\tilde n - kj)!$. We now consider
 \begin{align}\nonumber
      [x^0] \, e^{x^k/k!} \, \sum_{j=0}^{\tilde n}(-1)^j x^{kj-k\tilde n}\,(k\tilde
      n - kj)!  \rho^{(q)}_j(m_1,\ldots,m_q) \\\nonumber =[x^0] \, \sum_{\ell}
      \frac{x^{k\ell}}{(k!)^\ell\ell!}  \, \sum_{j=0}^{\tilde n}(-1)^j
      x^{kj-k\tilde n}\, \rho^{(q)}_j(m_1,\ldots,m_q)\\\nonumber
      =\sum_{j=0}^{\tilde n} (-1)^j
      \frac{(k\tilde n - kj)!}{(k!)^{\tilde n - j}(\tilde n - j)!}
      \rho^{(q)}_j(m_1,\ldots,m_q),
       \end{align}
  and so the contour integral in $x$ picks out precisely this expression.
\end{proof}

\begin{table}[H]
\begin{center}
  \begin{tabular}{c|lllllll}
  $n$ \textbackslash$q$& 0& 1& 2& 3& 4\\
  \hline  
1& 0& 1\\
2& 7& 3\\
3& 219& 56& 5\\
4& 12861& 2352& 183& 4\\
5& 1215794& 174137& 11145& 323& 1\\
6& 169509845& 19970411& 1078977& 30833& 334\\
\end{tabular}
\end{center}
\caption{The numbers $c_{n,q}$ of linear $k$-chord diagrams of length
  $kn$ with exactly $q$ connected components for the case $k=3$. OEIS
  sequence \seqnum{A334060}; the $k=2$ case is \seqnum{A334059} and
  the $k=4$ case is \seqnum{A334061}.}
\end{table}

\begin{theorem}\label{Thmbycomp}
  The numbers $c_{n,q}=[y^nz^q]C_k(y,z)$ count the number of $k$-chord
  diagrams of length $kn$ with exactly $q$ connected components, where
 \begin{equation}\nonumber
 C_k(y,z)=   
  \sum_{j\geq 0}\frac{(kj)!}{j!(k!)^j}\,
  y^j\left(\frac{1-y(1-z)}{1-y^2(1-z)}\right)^{kj+1}.
\end{equation}
\end{theorem}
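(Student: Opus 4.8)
The plan is to reduce the count $c_{n,q}$ to the inclusion-exclusion data already assembled in Lemma~\ref{Lemmethod2}, and then to sum the result over component sizes and over $q$. Reading a diagram with exactly $q$ connected components from left to right, it consists of $q+1$ free regions (regions containing no short chord) alternating with $q$ components; the two outer regions may be empty while the $q-1$ inner regions must be nonempty, which is precisely the structure $L_k^2(L_k-1)^{q-1}$ of Lemma~\ref{Lemroq}, and each free region must carry a zero-short-chord configuration of the $k$-chords not belonging to any component. A component occupying $km$ vertices is the unique block of $m$ adjacent short chords, so I would track it by the factor $\sum_{m\ge 1}y^{km}=y^k/(1-y^k)$, marked by $z$, with the power of $y$ counting vertices. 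Because $\rho^{(q)}_j$ depends on the component sizes only through their sum, taking the $q$-th power of this factor automatically sums over all ordered sizes $(m_1,\dots,m_q)$.

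First I would fix $q\ge 1$ and note that $\sum_n c_{n,q}\,y^{kn}$ equals $\bigl(y^k/(1-y^k)\bigr)^q$ multiplied by the integral of Lemma~\ref{Lemmethod2} applied to $e^{x^k/k!}\,\mathcal{L}^2(\mathcal{L}-1)^{q-1}$, where $\mathcal{L}=L_k(xe^{i\pi/k}/t,\,yt/x)$; the component factor supplies exactly the $y^{kM}$ that the $[y^{k\tilde n}]$ extraction of that lemma omits, with $\tilde n=n-M$ and $M=\sum m_p$. The $q=0$ term falls outside the $\mathcal{L}^2(\mathcal{L}-1)^{q-1}$ pattern and must be added by hand as the single-region integral of $e^{x^k/k!}\,\mathcal{L}$, i.e.\ the zero-short-chord count of Proposition~\ref{Prpzero}; as a consistency check, setting $z=0$ should collapse $C_k$ to $F_k(y,0)$ through $\tfrac{1-y}{1-y^2}=\tfrac{1}{1+y}$. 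Summing the resulting geometric series in $q$ under the integral sign then collapses the integrand to $\dfrac{\mathcal{L}(1+zP_1)}{1-zP_1(\mathcal{L}-1)}$ with $P_1=y^k/(1-y^k)$.

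Next I would use Proposition~\ref{PrpL} together with $(e^{i\pi/k})^k=-1$ to obtain the closed form $\mathcal{L}=x/\bigl(x(1+y^k)-yt\bigr)$, and then simplify the integrand (including the $1/x$ from the measure) to the single-pole expression $\dfrac{1-y^k(1-z)}{x\bigl(1-y^{2k}(1-z)\bigr)-yt\bigl(1-y^k(1-z)\bigr)}$. Evaluating the residue at the unique pole in $x$, carrying out the $t$-integration term by term in the expansion of $e^{x^k/k!}$ (so that $t^{kj}\mapsto(kj)!$), and finally rescaling $y^k\to y$ (only $y$, since here $z$ already appears with the correct exponent $1$ per component) should produce $\sum_{j\ge 0}\frac{(kj)!}{j!(k!)^j}\,y^j\bigl(\frac{1-y(1-z)}{1-y^2(1-z)}\bigr)^{kj+1}$, which is the asserted formula. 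These residue and integration steps run exactly parallel to the proof of Theorem~\ref{Thmbypolys}.

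The main obstacle is the bookkeeping of the second step rather than any hard analysis: one must correctly fuse the component vertices and the free-region vertices into a single variable $y$ even though Lemma~\ref{Lemmethod2} is stated for fixed sizes and extracts only $[y^{k\tilde n}]$, treat the $q=0$ term separately, and justify interchanging the sum over $q$ with the geometric series and the $t$-integral. Once the integrand is brought to the displayed rational form in $x$, the remainder is a routine residue computation and term-by-term integration identical in spirit to the short-chord case.
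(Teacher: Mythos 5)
Your proposal is correct and follows essentially the same route as the paper: apply Lemma~\ref{Lemmethod2}, sum over component sizes to get the factor $\left(y^k/(1-y^k)\right)^q$, sum the geometric series in $q$ under the integral, evaluate the residue in $x$, integrate term by term in $t$, and rescale $y^k\to y$. The only difference is bookkeeping of the $q=0$ term: you fold it into the sum before integrating, which cancels the pole at $x=yt/(1+y^k)$ and leaves the single-pole integrand you display, whereas the paper sums only over $q\geq 1$, keeps both poles, and observes that the second residue produces exactly $-F_k(y,0)$, which is then added back as the $z^0$ term.
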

  \begin{proof}
We now make use of Lemma~\ref{Lemmethod2} and sum over the sizes
$m_1,\ldots,m_q$ of the connected components
   \begin{align}\nonumber
      &C_k(y^k,z)-C_k(y^k,0)
      =   \sum_{q\geq 1} z^q \sum_{\{m_p\geq 1\}} y^{k\sum m_p} \sum_{j=0}^{\tilde n}
(-1)^j {\cal N}_{k,\tilde n - j}\, \rho^{(q)}_j(m_1,\ldots,m_q)\\\nonumber
&=\sum_{q\geq 1} z^q \left(\frac{y^k}{1-y^k}\right)^q
\int_0^\infty dt
      \,e^{-t}\,\frac{1}{2\pi i} \oint_{|x|=\epsilon} \frac{dx}{x}\,
      e^{x^k/k!} \,  L_k\left( \frac{xe^{i\pi/k}}{t},\frac{yt}{x}\right)^2
      \left( L_k\left(\frac{xe^{i\pi/k}}{t},\frac{yt}{x}\right) -
      1\right)^{q-1}\\\nonumber
&=\int_0^\infty dt
      \,e^{-t}\,\frac{1}{2\pi i} \oint_{|x|=\epsilon} \frac{dx}{x}\,
      e^{x^k/k!} \,\sum_{q\geq 1}
      \frac{-x^2(zy^k)^q(xy^k-ty)^{q-1}}{(y^k-1)^q(x-ty+xy^k)^{1+q}}\\\nonumber
      &=\int_0^\infty dt
      \,e^{-t}\,\frac{1}{2\pi i} \oint_{|x|=\epsilon} \frac{dx}{x}\,
      e^{x^k/k!} \,
      \frac{x^2 z\, y^k}{\Bigl(x(1+y^k)-ty\Bigr)
        \Bigl(x\bigl(1-y^{2k}(1-z)\bigr) -ty\bigl(1-y^k(1-z)\bigr)\Bigr)}\\\nonumber
      &=\int_0^\infty dt
      \,e^{-t}\,\left(\frac{1-y^k(1-z)}{1-y^{2k}(1-z)}
      \exp{\frac{t^k}{k!}}\left( \frac{y(1-y^k(1-z))}{1-y^{2k}(1-z)} \right)^k
      -\frac{1}{1+y^{k}}
      \exp{\frac{t^k}{k!}}\left( \frac{y}{1+y^{k}} \right)^k
      \right).
    \end{align}
    The integration over $t$ proceeds term-by-term in an expansion of
    the exponentials as was seen in Theorem~\ref{Thmbypolys}. In
    going from the penultimate line to the last, the contour integral
    picks-up two residues, one of which produces precisely minus the
    generating function for the zero-short chord configurations,
    i.e., $-F_k(y,0)$ from Theorem~\ref{Thmbypolys}, corresponding to
    the case of zero connected components. Adding this back as a $z^0$
    term, and replacing $y^k \to y$ we obtain the advertised result
    for $C_k(y,z)$.
\end{proof}

\subsection{Asymptotic distributions}

We expect the asymptotic distribution of short chords, i.e.,
$$
\lim_{n\to \infty} \frac{d_{n,\ell}}{{\cal N}_{k,n}},
$$ to be Poisson with mean given by (the large-$n$ limit of)
Theorem~\ref{Thmmean}
\begin{equation}\nonumber
  \lambda =  \lim_{n\to\infty} {kn\choose k}^{-1}n\,(kn-(k-1))
  = k!\,k^{1-k}\,n^{2-k}.
\end{equation}
The distribution of connected components
$$
\lim_{n\to \infty} \frac{c_{n,q}}{{\cal N}_{k,n}},
$$ should have the same distribution. This is because for long
diagrams, most configurations with $\ell$ short chords will also have
$\ell$ trivially connected components. We begin by considering short
chords.

\begin{theorem}
The asymptotic distribution of short chords is Poisson with mean
$\lambda=k!\,k^{1-k}\,n^{2-k}$.
\end{theorem}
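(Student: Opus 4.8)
The plan is to prove the statement by the method of factorial moments. Let $X$ denote the number of short chords in a uniformly random linear $k$-chord diagram of length $kn$. Since the $j$-th falling factorial moment of a Poisson variable of mean $\lambda$ equals $\lambda^j$, it suffices to show that for each fixed $j$,
\begin{equation}\nonumber
E\bigl[(X)_j\bigr]=E\bigl[X(X-1)\cdots(X-j+1)\bigr]\sim\lambda^j
\qquad(n\to\infty),
\end{equation}
with $\lambda=k!\,k^{1-k}\,n^{2-k}$. Because $\lambda\to 0$ for $k>2$, this is to be read as $E[(X)_j]/\lambda^j\to 1$ for every fixed $j$; only for $k=2$ (where $\lambda=1$) is the limit a genuine non-degenerate Poisson law, recovering Kreweras and Poupard.

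First I would identify the binomial moments of $X$ with the ``at least $j$'' counts of Lemma~\ref{Lematleast}. The proof of that lemma places short chords on $j$ chosen non-overlapping subpaths and fills the remainder arbitrarily, so a diagram with exactly $\ell$ short chords is counted there exactly $\binom{\ell}{j}$ times. Writing $A_j$ for the quantity in Lemma~\ref{Lematleast}, this yields the exact identity
\begin{equation}\nonumber
E\!\left[\binom{X}{j}\right]=\frac{1}{{\cal N}_{k,n}}\sum_{\ell}\binom{\ell}{j}\,d_{n,\ell}=\frac{A_j}{{\cal N}_{k,n}},
\end{equation}
and hence $E[(X)_j]=j!\,A_j/{\cal N}_{k,n}$. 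This is the key conceptual step; everything after it is asymptotic bookkeeping.

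Next I would insert the explicit values. Using Proposition~\ref{Nnk} and Lemma~\ref{Lematleast},
\begin{equation}\nonumber
\frac{A_j}{{\cal N}_{k,n}}=(k!)^{j}\,\frac{(k(n-j))!}{(kn)!}\,\frac{n!}{(n-j)!}\,\binom{kn-j(k-1)}{j}.
\end{equation}
For fixed $j$ and $n\to\infty$ one has $n!/(n-j)!\sim n^{j}$, $\binom{kn-j(k-1)}{j}\sim (kn)^{j}/j!$, and $(k(n-j))!/(kn)!\sim (kn)^{-kj}$. Multiplying these estimates gives $A_j/{\cal N}_{k,n}\sim (k!)^{j}k^{j(1-k)}n^{j(2-k)}/j!=\lambda^j/j!$, so that $E[(X)_j]=j!\,A_j/{\cal N}_{k,n}\sim\lambda^j$, consistent with the $j=1$ case of Theorem~\ref{Thmmean}.

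Finally I would invoke the factorial-moment characterization of the Poisson law to conclude. The main subtlety here is not any single hard computation but the meaning of ``asymptotically Poisson'' when $\lambda$ itself depends on $n$: for $k>2$ the limiting distribution is degenerate at $0$, and the genuine content of the theorem is the precise rate at which the factorial moments decay, namely $E[(X)_j]\sim\lambda^j$. The only care needed in the asymptotic step is to keep all three factorial ratios to leading order simultaneously, which is routine since $j$ is held fixed.
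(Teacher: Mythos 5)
Your proposal is correct and takes essentially the same route as the paper: the paper likewise obtains the $j$-th factorial moment from the ``at least $j$'' counts of Lemma~\ref{Lematleast} (packaged as the coefficients of $(z-1)^j$ in Lemma~\ref{Lemnearly}), applies the same three asymptotic estimates to get $\lambda^j/j!$, and concludes Poisson behaviour from the factorial moments. Your explicit double-counting identity $E\bigl[\binom{X}{j}\bigr]=A_j/{\cal N}_{k,n}$ and your clarification of what ``asymptotically Poisson'' means when $\lambda\to 0$ for $k>2$ merely make precise what the paper leaves implicit.
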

\begin{proof}
Using Lemma~\ref{Lemnearly}, we have that 
  \begin{align}\nonumber
  \lim_{n\to \infty} \frac{d_{n,\ell}}{{\cal N}_{k,n}}
  &=[z^\ell]\lim_{n\to \infty}  \sum_{j=0}^n
  (k!)^j\frac{n!}{(n-j)!}\frac{(k(n-j))!}{(kn)!} {kn - j(k-1)\choose j} (z-1)^j\\\nonumber
  &=[z^\ell]\sum_{j=0}^\infty (k!)^jn^j\frac{1}{(kn)^{kj}} \frac{(kn)^j}{j!}(z-1)^j
  =[z^\ell]\sum_{j=0}^\infty \frac{1}{j!}\left(\frac{k!}{k^{k-1}n^{k-2}}\right)^j
  (z-1)^j\\\nonumber
  &=[z^\ell] \sum_{j=0}^\infty \frac{\lambda^j}{j!} (z-1)^j.
  \end{align}
  It follows that the $j^{\text{th}}$ factorial moment is $\lambda^j$,
  and hence the distribution is Poisson.
\end{proof}

\begin{theorem}
The asymptotic distribution of connected components is Poisson with mean
$\lambda=k!\,k^{1-k}\,n^{2-k}$.
\end{theorem}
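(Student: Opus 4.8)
The plan is to mirror the proof of the preceding theorem, but with the short-chord enumeration replaced by the component generating function $C_k(y,z)$ of Theorem~\ref{Thmbycomp}, and to show that the factorial moments of the number of connected components share the leading large-$n$ behaviour $\lambda^q$ of the short chords. Writing $G_n(z)=[y^n]C_k(y,z)/{\cal N}_{k,n}$ for the probability generating function of the component count in a uniformly random $k$-chord diagram of length $kn$ (so that $G_n(1)=1$, since $C_k(y,1)=\sum_{j\ge 0}{\cal N}_{k,j}\,y^j$), the $q$-th factorial moment is $M_q(n)=G_n^{(q)}(1)=q!\,[s^q][y^n]\,C_k(y,1+s)/{\cal N}_{k,n}$. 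Because a Poisson law of mean $\lambda$ has factorial moments exactly $\lambda^q$, it suffices to establish $M_q(n)=\lambda^q\bigl(1+o(1)\bigr)$ for each fixed $q$, after which the method of moments for the Poisson distribution finishes the argument exactly as in the short-chord case.

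The first step is the algebraic simplification at $z=1+s$. Putting $1-z=-s$ in the bracket of $C_k$ gives
\begin{equation}\nonumber
\frac{1-y(1-z)}{1-y^2(1-z)}=\frac{1+ys}{1+y^2s}=1+\phi,\qquad \phi=\phi(y,s)=(1-y)\,\frac{ys}{1+y^2s},
\end{equation}
so that $\phi=O(s)$ and, as a series in $y$, $\phi=O(y)$; in fact $[s^r]\phi=(-1)^{r-1}y^{2r-1}(1-y)$, whence the lowest power of $y$ occurring in $[s^q](1+\phi)^{kj+1}$ is $y^q$, arising solely from $\binom{kj+1}{q}\phi^q$ with $[y^qs^q]\phi^q=1$. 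Substituting into $C_k(y,1+s)=\sum_{j\ge 0}{\cal N}_{k,j}\,y^j(1+\phi)^{kj+1}$ and extracting $[y^ns^q]$ therefore shows that the summand indexed by $j$ contributes only when $n-j\ge q$, i.e.\ the sum runs over $j\le n-q$.

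The decisive point is that ${\cal N}_{k,j}/{\cal N}_{k,n}$ falls by a factor $\sim k!\,k^{-k}n^{1-k}\to 0$ with each unit decrease of $j$, so the term $j=n-q$ dominates. That term contributes $q!\,{\cal N}_{k,n-q}\binom{k(n-q)+1}{q}/{\cal N}_{k,n}$ to $M_q(n)$, and using ${\cal N}_{k,n-q}/{\cal N}_{k,n}\sim(k!\,k^{-k}n^{1-k})^q$ together with $\binom{k(n-q)+1}{q}\sim(kn)^q/q!$ yields precisely $(k!\,k^{1-k}n^{2-k})^q=\lambda^q$, the same value produced for the short chords by $F_k$ of Theorem~\ref{Thmbypolys}. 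This is the quantitative counterpart of the heuristic that, for long diagrams, almost all configurations have only size-one components.

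The main obstacle is the tail estimate controlling the remaining terms $j\le n-q-1$. For $j=n-q-i$ one must bound $[y^{q+i}s^q](1+\phi)^{kj+1}$, which is a polynomial in $kj$ of degree $\le q$ with $n$-independent coefficients and hence of size $O\bigl((kn)^q\bigr)$, against the ratio ${\cal N}_{k,n-q-i}/{\cal N}_{k,n}\sim(k!\,k^{-k}n^{1-k})^{q+i}$; this makes the $i$-th correction $O\bigl(\lambda^q\,n^{-(k-1)i}\bigr)$, so the full tail is a geometric-type series of ratio $O(n^{-(k-1)})\to 0$ and is therefore $o(\lambda^q)$. Establishing this bound uniformly in $i$ is the one genuinely technical point; with it in hand, $M_q(n)=\lambda^q(1+o(1))$ for every fixed $q$, and the proof concludes as in the preceding theorem.
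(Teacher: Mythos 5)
Your proposal is correct, and at its core it follows the same route as the paper: both proofs extract the factorial moments of the component count from the generating function $C_k(y,z)$ of Theorem~\ref{Thmbycomp}, use the ratio ${\cal N}_{k,n-m}/{\cal N}_{k,n}\sim\left(k!\,k^{-k}n^{1-k}\right)^{m}$ together with $\binom{kn+O(1)}{q}\sim (kn)^q/q!$, and conclude Poisson by the method of (factorial) moments. The difference is in how the coefficient extraction is organized: the paper expands the kernel binomially in powers of $(z-1)$ with two indices, re-indexes, and passes to the limit term by term, obtaining the limiting probability generating function $\sum_\ell \lambda^\ell(z-1)^\ell/\ell!$ in one stroke, whereas you substitute $z=1+s$, package the kernel as $(1+\phi)^{kj+1}$, and run a dominant-term-plus-tail analysis at fixed moment order $q$. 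Your organization buys a more explicit error control (the paper exchanges limits with infinite sums without comment), at the cost of handling one moment at a time. One remark on the step you flag as ``the one genuinely technical point'': it is in fact immediate, and your tail is finite. Since
\begin{equation}\nonumber
[s^q]\phi^p=(-1)^{q-p}\binom{q-1}{p-1}\,y^{2q-p}(1-y)^p ,
\end{equation}
the polynomial $[s^q](1+\phi)^{kj+1}$ has degree at most $2q$ in $y$, so $[y^{q+i}s^q](1+\phi)^{kj+1}=0$ whenever $i>q$. Hence the corrections $j=n-q-i$ contribute only for $1\le i\le q$, with coefficients bounded by constants depending on $q$ alone, each term being $O\bigl(\lambda^q n^{-(k-1)i}\bigr)$; uniformity in $i$ never arises, and the tail is $o(\lambda^q)$ as claimed.
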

\begin{proof}
We begin by expanding the generating function given in Theorem~\ref{Thmbycomp}
\begin{align}\nonumber
   &\frac{c_{n,q}}{{\cal N}_{k,n}} =\frac{1}{{\cal N}_{k,n}} [y^nz^q]\sum_{j\geq
      0}\frac{(kj)!}{j!(k!)^j}\,
    y^j\left(\frac{1-y(1-z)}{1-y^2(1-z)}\right)^{kj+1}
    \\\nonumber &=\sum_{j\geq 0}\frac{{\cal N}_{k,j}}{{\cal N}_{k,n}}
       [y^nz^q]\sum_{p,r} y^{j+p+2r}(z-1)^{p+r} {kj+1\choose p}
       {kj+r\choose r}(-1)^r \\\nonumber
       &= \sum_{j\geq 0}\frac{{\cal N}_{k,j}}{{\cal
           N}_{k,n}}[y^nz^q]\sum_{\ell,r} y^{j+\ell+r}(z-1)^{\ell} {kj+1\choose
         \ell-r} {kj+r\choose r}(-1)^r\\\nonumber
       &= [z^q]\sum_{\ell,r} \frac{{\cal N}_{k,n-\ell-r}}{{\cal
           N}_{k,n}}(z-1)^{\ell} {k(n-\ell-r)+1\choose
         \ell-r} {k(n-\ell-r)+r\choose r}(-1)^r.
 \end{align}
We now take the $n\to\infty$ limit
\begin{align}\nonumber
    &\lim_{n\to\infty}\frac{c_{n,q}}{{\cal N}_{k,n}} =[z^q]\sum_{\ell,r}
    \frac{(k!)^{\ell+r}}{k^{k(\ell+r)}n^{(k-1)(\ell+r)}}\,
    (z-1)^{\ell}\,
    \frac{(kn)^{\ell-r}}{(\ell-r)!}\,
    \frac{(kn)^r}{r!}\,
    (-1)^r\\\nonumber
    &= [z^q]\sum_{\ell,r}
    \frac{(-1)^r}{\ell!} {\ell\choose r} \left(\frac{k!}{k^k\,n^{k-1}}\right)^r
    (z-1)^\ell \, \lambda^\ell = [z^q]\sum_\ell \left(1-\frac{k!}{k^k\,n^{k-1}}\right)^\ell
    \frac{(z-1)^\ell\,\lambda^\ell}{\ell!}\\\nonumber
    &\simeq [z^q]\sum_\ell
        \frac{(z-1)^\ell\,\lambda^\ell}{\ell!}.
\end{align}
It follows that the $j^{\text{th}}$ factorial moment is $\lambda^j$,
and hence the distribution is Poisson.
\end{proof}
  
\section{Non-crossing configurations}\label{Secnc}

For the case of linear chord diagrams the enumeration of so-called
\emph{non-crossing configurations}, where no two chords cross each
other, is by now standard combinatorical lore. For the sake of
completeness, and in order to motivate the case for general $k$, we
repeat the main elements of the arguments given by Kreweras and
Poupard~\cite{KP}, who established a bijection with Dyck paths. The
mapping is as follows: traversing the path of length $2n$ from left to
right, we map the start of a chord to an up step $(0,+1)$, and the end
of a chord with a down step $(+1,0)$.

\begin{figure}[H]
\begin{center}
\includegraphics[bb=0 0 209 76, height=0.95in]{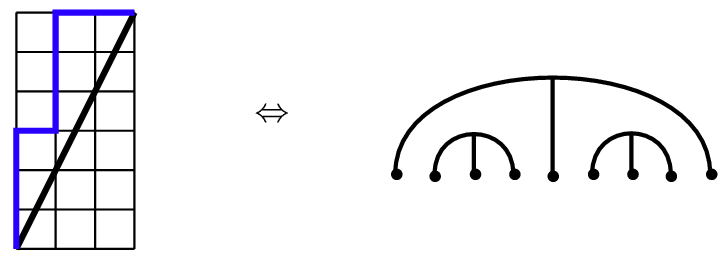}
\end{center}
\caption{The bijection between non-crossing configurations and lattice
  paths for the case of $k=3$.}
\label{Figlattice}
\end{figure}

To establish the mapping in the
other direction, we associate consecutive up steps with the starting
vertices of successively nested chords. It is clear that there are $n$
up steps and $n$ down steps, and that the first step is always an up
step. A short chord is mapped to a peak, i.e., to an up step
immediately followed by a down step. Therefore the Narayana numbers
${n \choose \ell}{n\choose\ell-1}/n$ give the number of non-crossing
configurations with exactly $\ell$ short chords, and the Catalan
numbers ${2n\choose n}/(n+1)$ count the total number of non-crossing
configurations. The bijection to lattice paths can be extended for
general $k$, to paths which begin and end on the line $y=(k-1)x$. A
short chord, traversed left to right, is represented by $k-1$ up steps
followed by a single down step, see
Figure~\ref{Figlattice}\footnote{There is an additional bijection for
  the case of $k=3$ to (rooted) non-crossing trees \cite{N}, where the
  (non-root) vertices correspond to chords, their level to the nesting
  level of the chords, and the leaves to short chords, cf.\ \seqnum{A091320}.}.

In order to generalize the counting to linear $k$-chord diagrams we establish the
following recurrence.
\begin{theorem}\label{Thmnc}
  The number $T_{m,\ell}$ of non-crossing linear $k$-chord diagrams of
  length $km$ with exactly $\ell$ short chords, obeys the following
  recurrence relation
  \begin{equation}\nonumber
  T_{m+1,\ell} = \sum_{\substack{\sum m_i =m\\\nonumber\sum \ell_i=\ell}}\,
  \prod_{i=1}^k T_{m_i,\ell_i} - T_{m,\ell} + T_{m,\ell-1},
  \qquad T_{0,0}=1.
  \end{equation}
\end{theorem}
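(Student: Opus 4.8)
The plan is to prove the recurrence by a direct combinatorial decomposition of a non-crossing $k$-chord diagram according to its \emph{leftmost} chord, i.e., the unique $k$-chord $C$ occupying the first vertex. This is the natural generalization of the first-chord decomposition that produces the Catalan/Narayana recurrence when $k=2$. Write the vertices of $C$ as $1=p_1<p_2<\cdots<p_k$. These vertices cut the interval $\{1,\ldots,k(m+1)\}$ into $k$ \emph{gaps}: the open intervals $(p_i,p_{i+1})$ for $i=1,\ldots,k-1$, together with the terminal interval $(p_k,k(m+1)]$. (There is no gap before $p_1$, since $p_1=1$.)

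The first key step is to use the definition of non-crossing to show that every chord other than $C$ has all of its $k$ vertices inside a single gap: a chord with vertices in two different gaps would interleave with $C$ and hence cross it. Consequently the remaining $m$ chords split into $k$ independent sub-configurations $D_1,\ldots,D_k$, one per gap, where $D_i$ is itself a non-crossing $k$-chord diagram of some length $km_i$ with $\sum_{i=1}^k m_i=m$. I would check that this assignment is a bijection: chords in distinct gaps cannot cross (they lie in disjoint intervals), chords within a gap inherit the non-crossing property, and conversely any choice of non-crossing sub-diagrams placed in the gaps, together with $C$, reassembles into a valid non-crossing diagram of length $k(m+1)$. This produces the product $\prod_{i=1}^k T_{m_i,\ell_i}$ summed over $\sum m_i=m$, which is exactly the composition sum appearing in the statement.

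The second step is the short-chord bookkeeping, which is where the correction $-T_{m,\ell}+T_{m,\ell-1}$ comes from. I would first note that a short chord contained in some $D_i$ remains a short chord of the full diagram, because the vertices of each gap form a contiguous block, so adjacency inside a gap coincides with adjacency overall; hence the sub-diagrams contribute exactly $\sum_i \ell_i$ short chords. The only additional short chord can be $C$ itself, and $C$ is short precisely when gaps $1,\ldots,k-1$ are empty, i.e., when $m_1=\cdots=m_{k-1}=0$. Separating the decomposition into the case where these front gaps are empty and the case where they are not, and tracking whether $C$ contributes an extra unit to $\ell$, yields the two corrections: the empty-front-gap terms of the product sum collapse (via $T_{0,0}=1$ and $T_{0,\ell_i}=0$ for $\ell_i>0$) to $T_{m,\ell}$, where $C$ has not yet been counted as short; these must be removed and replaced by $T_{m,\ell-1}$, the same configurations but now with $C$ counted as the $\ell$-th short chord.

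I expect the main obstacle to be making the gap decomposition fully rigorous from the formal definition of ``non-crossing'' for $k$-chords — in particular confirming that no other chord can straddle a vertex of $C$, and that the converse reassembly is well-defined and bijective. The short-chord accounting is delicate but elementary once the decomposition is in place; I would sanity-check the whole scheme against the base values $T_{0,0}=1$, $T_{1,1}=1$, $T_{1,0}=0$, and against the $k=2$ Narayana recurrence, before trusting the general bookkeeping.
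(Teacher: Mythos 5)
Your proposal is correct and is essentially the paper's own argument: the paper likewise decomposes around a distinguished $k$-chord, convolving non-crossing sub-diagrams placed in its $k-1$ arches and in the one remaining outer region (yours is the mirror image, rooting at the leftmost chord with the free region on the right), which yields the product term. The correction $-T_{m,\ell}+T_{m,\ell-1}$ is handled identically in both: when all arches are empty the distinguished chord is itself short, so those configurations are subtracted at index $\ell$ and re-added from index $\ell-1$.
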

\begin{proof}
  The proof proceeds diagrammatically, see
  Figure~\ref{Figncrecurrence}. The first term accounts for all
  possible nestings of smaller non-crossing diagrams in the $k-1$
  arches, and also to the left, of an additional $k$-chord. This term
  counts one set of configurations incorrectly, which are those
  pictured on the right in Figure~\ref{Figncrecurrence}. When the
  arches are empty, corresponding to $m_{i> 1}=0,\ell_{i> 1}=0$, the
  additional $k$-chord is also a short chord. Thus this set of
  configurations must be subtracted, and hence the second term in the
  recurrence relation. The third term adds back the correction.
\end{proof}
\begin{figure}[H]
\begin{center}
\includegraphics[bb=0 0 500 76, height=0.75in]{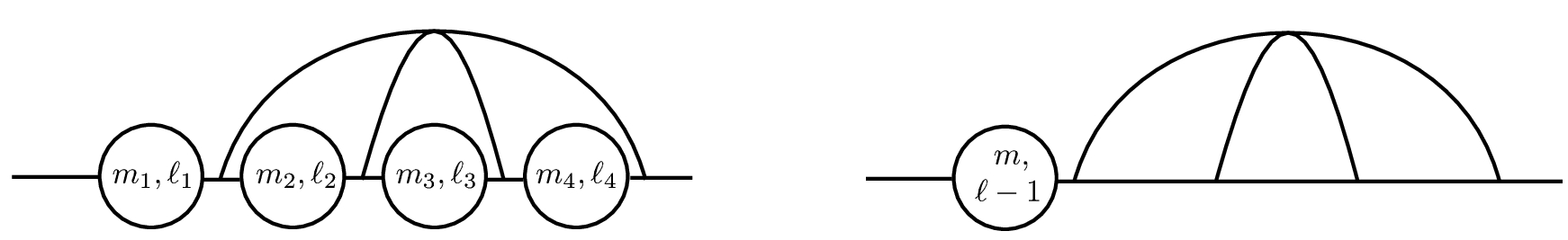}
\end{center}
\caption{Configurations of non-crossing linear $k$-chord diagrams for
  the case of $k=4$. The circles represent all non-crossing linear
  $4$-chord diagrams with $m_i$ $4$-chords, $\ell_i$ of which are
  short chords.}
\label{Figncrecurrence}
\end{figure}
\begin{corollary}\label{CorT}
  The generating function for the $T_{m,\ell}$ is $ T(x,y) =
  \sum_{m,\ell\geq 0} T_{m,\ell}\,x^my^{\ell}$ and obeys
  \begin{equation}\nonumber
    T(x,y) -1 = x T(x,y)^k -x(1-y) T(x,y).
      \end{equation}
\end{corollary}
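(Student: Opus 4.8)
The plan is to convert the recurrence of Theorem~\ref{Thmnc} into the advertised functional equation by the standard device of multiplying through by $x^{m+1}y^{\ell}$ and summing over all $m,\ell\geq 0$, working throughout with formal power series. Once one matches each of the three terms on the right-hand side of the recurrence with a corresponding expression in $T(x,y)$, the computation is entirely mechanical.

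First I would record the boundary behaviour needed to pin down the constant term: a diagram of length $0$ has no short chords, so $T_{0,0}=1$ and $T_{0,\ell}=0$ for $\ell\geq 1$, whence $\sum_{\ell}T_{0,\ell}\,y^{\ell}=1$. Summing the left-hand side of the recurrence then gives $\sum_{m,\ell\geq 0}T_{m+1,\ell}\,x^{m+1}y^{\ell}=T(x,y)-1$, where the subtracted $1$ accounts precisely for the missing $(m,\ell)=(0,0)$ term after reindexing.

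Next I would handle the three right-hand terms in turn. The convolution term $\sum_{\sum m_i=m,\ \sum\ell_i=\ell}\prod_{i=1}^{k}T_{m_i,\ell_i}$ is by definition the coefficient of $x^m y^{\ell}$ in $T(x,y)^k$, so after multiplying by $x^{m+1}y^{\ell}$ and summing it contributes $x\,T(x,y)^k$. The term $-T_{m,\ell}$ contributes $-x\,T(x,y)$, and the term $T_{m,\ell-1}$, after the shift $\ell\mapsto\ell-1$ (using $T_{m,-1}=0$), contributes $+xy\,T(x,y)$. Collecting these gives $T(x,y)-1=x\,T(x,y)^k-x\,T(x,y)+xy\,T(x,y)=x\,T(x,y)^k-x(1-y)\,T(x,y)$, which is the claimed identity.

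The only point requiring care — and the nearest thing to an obstacle — is verifying that the multi-index convolution genuinely coincides with $T^k$ rather than with some restricted power: the summation indices $m_i,\ell_i$ must be permitted to take the value $0$, so that the empty-arch contributions $T_{0,0}=1$ are retained, and the only constraints are on the totals $\sum m_i$ and $\sum\ell_i$. Both conditions hold by the statement of Theorem~\ref{Thmnc}, so the identification with the $k$-th power is immediate and the remainder is a routine reindexing.
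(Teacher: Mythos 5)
Your proposal is correct and follows the same route as the paper: the paper's proof is precisely the one-line statement that the functional equation follows by multiplying the recurrence of Theorem~\ref{Thmnc} by $x^{m+1}y^{\ell}$ and summing over $m$ and $\ell$. Your write-up simply makes explicit the bookkeeping (the boundary term $T_{0,0}=1$ producing the $-1$, the convolution giving $x\,T(x,y)^k$, and the shift $\ell\mapsto\ell-1$ giving $xy\,T(x,y)$) that the paper leaves to the reader.
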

\begin{proof}
This is shown by multiplying the recurrence relation of
Theorem~\ref{Thmnc} by $x^{m+1}y^\ell$ and summing over $m$ and
$\ell$.
\end{proof}
\begin{corollary}
The total number of of non-crossing linear $k$-chord diagrams of length $km$
is given by the Fuss-Catalan number
\begin{equation}\nonumber
T_m=  \sum_{\ell= 1}^m T_{m,\ell} = \frac{1}{(k-1)m+1}{km\choose m}.
  \end{equation}
\end{corollary}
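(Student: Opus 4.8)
The plan is to specialize the bivariate functional equation of Corollary~\ref{CorT} to $y=1$ and then apply Lagrange inversion. Since $T_m = \sum_{\ell} T_{m,\ell}$, the univariate generating function $B(x) := T(x,1) = \sum_{m\geq 0} T_m\, x^m$ is exactly what is obtained by setting $y=1$. Doing so in Corollary~\ref{CorT} kills the term $-x(1-y)T(x,y)$, leaving the clean relation
$$B(x) = 1 + x\, B(x)^k.$$
This is precisely the defining functional equation of the Fuss-Catalan generating function, so the result should follow by extracting coefficients. (One should also note in passing that $T_{m,0}=0$ for $m\geq 1$, since the innermost nested $k$-chord of any non-crossing diagram is necessarily short; this is why summing from $\ell=1$ agrees with $B(x)$.)

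To carry out the extraction, I would substitute $u = B(x) - 1$, so that the relation becomes $u = x(1+u)^k$, which is in the standard form $u = x\,\phi(u)$ with $\phi(u) = (1+u)^k$, amenable to Lagrange inversion. Lagrange inversion then gives, for $m\geq 1$,
$$[x^m]\, u = \frac{1}{m}[u^{m-1}]\,\phi(u)^m = \frac{1}{m}[u^{m-1}](1+u)^{km} = \frac{1}{m}\binom{km}{m-1}.$$
Since $B = 1+u$ and $u$ has no constant term, $[x^m]B = [x^m]u$ for all $m\geq 1$, while the $m=0$ term is $B(0)=1$, which also matches the formula.

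It then remains to rewrite $\frac{1}{m}\binom{km}{m-1}$ in the advertised form. Using $(k-1)m+1 = km-m+1$ and $\binom{km}{m-1} = \frac{(km)!}{(m-1)!\,(km-m+1)!}$, one checks directly that
$$\frac{1}{m}\binom{km}{m-1} = \frac{(km)!}{m!\,(km-m+1)!} = \frac{1}{(k-1)m+1}\binom{km}{m},$$
which is the claimed Fuss-Catalan number. This binomial manipulation, together with the observation that $T_{m,0}=0$, constitute the only routine verifications. The main conceptual step is recognizing that setting $y=1$ collapses the functional equation to the Fuss-Catalan recursion $B = 1 + xB^k$; after that, the Lagrange inversion is entirely mechanical and I anticipate no genuine obstacle.
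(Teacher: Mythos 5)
Your proposal is correct and follows essentially the same route as the paper: both set $y=1$ in Corollary~\ref{CorT} to obtain $T(x,1)=1+x\,T(x,1)^k$ and then apply Lagrange inversion to extract the Fuss--Catalan coefficients. The only cosmetic difference is that you shift to $u=B(x)-1$ and use the coefficient form of Lagrange inversion, whereas the paper inverts $x=(w-1)/w^k$ directly via the derivative form evaluated at $w=1$; the computations are equivalent.
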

\begin{proof}
This can be established via Lagrange inversion on Corollary~\ref{CorT}, with
$y=1$. We have that
\begin{align}\nonumber
    &x=\frac{T(x,1)-1}{T(x,1)^k}=f(T(x,1)),\\\nonumber
    &g_m=\lim_{w\to 1} \frac{d^{m-1}}{dw^{m-1}}
  \left(\frac{w-1}{f(w)-f(1)}\right)^m =
  \lim_{w\to 1} \frac{d^{m-1}w^{km}}{dw^{m-1}}
  =\frac{(km)!}{((k-1)m+1)!},\\\nonumber
  &T(x,1) = 1 + \sum_{m>0} g_m \frac{x^m}{m!} =
  1+ \sum_{m>0} \frac{x^m}{(k-1)m+1}{km\choose m}.
\end{align}
The Fuss-Catalan numbers appear as \seqnum{A062993} in the OEIS.
\end{proof}

Kreweras and Poupard~\cite{KP} also give a closed expression for the
number $d_{n,\ell,m}$ of linear chord diagrams on the path of length
$2n$ with $m$ non-crossing chords and $\ell$ short chords $$
d_{n,\ell,m} = \frac{2n-2m+1}{m}{m\choose \ell}{2n-m\choose \ell
  -1}\,d_{n-m,0} ~\Leftrightarrow~ k=2.$$ They obtain this by
considering disconnected regions consisting solely of non-crossing
chords. They note that upon removing these regions, and joining the
remaining components, one is necessarily left with a zero-short-chord
configuration of length $2n-2m$; hence the appearance of the number of
such configurations, i.e., $d_{n-m,0}$. For a general value of $k$, we
have the following theorem.
\begin{theorem}
The number $d_{n,\ell,m}$ of linear $k$-chord diagrams of length $kn$ with
exactly $\ell$ short chords, and exactly $m$ non-crossing chords is
given by
  \begin{equation}\nonumber
d_{n,\ell,m}=[x^my^\ell]\,T(x,y)^{kn-km+1}\,d_{n-m,0},
  \end{equation}
where $d_{n-m,0}$ is the number of zero-short-chord configurations.
\end{theorem}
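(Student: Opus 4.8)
The plan is to establish a bijective decomposition of each linear $k$-chord diagram into a ``skeleton'' carrying no short chords, together with a collection of non-crossing sub-diagrams inserted into the gaps of that skeleton, and then to translate the decomposition into the symbolic method. This mirrors the argument of Kreweras and Poupard for $k=2$ quoted above.

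First I would show that the non-crossing chords of a diagram $D$ occupy maximal contiguous blocks of vertices. Using the definition of a non-crossing chord---one that is uncrossed and contains only non-crossing chords---one argues by induction on nesting depth that the span of any top-level non-crossing chord is a contiguous interval consisting entirely of non-crossing chords; hence each such block is itself a non-crossing linear $k$-chord diagram. Deleting all of these blocks and closing up the remaining vertices produces a diagram $S$, the skeleton, on $k(n-m)$ vertices with $n-m$ chords. Since every short chord is in particular a non-crossing chord, $S$ contains no short chords, and so it is one of the $d_{n-m,0}$ zero-short-chord configurations.

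Next I would verify that the map sending $D$ to the pair consisting of $S$ and its gap fillings is a bijection. The forward direction is determined by deleting the non-crossing chords. For the inverse, I insert a (possibly empty) non-crossing $k$-chord diagram into each of the $k(n-m)+1 = kn-km+1$ gaps delimited by the $k(n-m)$ skeleton vertices, namely the $k(n-m)-1$ internal gaps together with the two ends. Two points must be checked: that each inserted chord is genuinely non-crossing in the reassembled $D$ (it lies within a single gap, so it is nested inside, or disjoint from, every skeleton chord, and its only enclosed chords come from the same filling), and that no skeleton chord becomes non-crossing (a skeleton chord is either crossed or contains a chord that is itself not non-crossing, and both properties survive deletion and insertion, again by induction). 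One also checks that the short chords of $D$ are exactly those of the fillings, and that the non-crossing chords of $D$ are exactly the inserted ones, so that marking the total number of inserted chords by $x$ and the total number of inserted short chords by $y$ correctly records $m$ and $\ell$.

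Finally, the symbolic method assembles the $kn-km+1$ gaps, each filled by a non-crossing diagram with generating function $T(x,y)$ from Corollary~\ref{CorT}, into the product $T(x,y)^{kn-km+1}$; extracting $[x^m y^\ell]$ and multiplying by the number $d_{n-m,0}$ of skeletons yields the claimed formula. The main obstacle is the bijection in the preceding paragraph: one must argue carefully that the non-crossing property is preserved in both directions and that the block-and-gap structure makes the decomposition unique, since it is precisely this rigidity that guarantees each gap is filled independently by exactly one factor of $T(x,y)$.
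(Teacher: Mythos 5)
Your proposal is correct and follows essentially the same route as the paper: decompose the diagram into a zero-short-chord skeleton of length $k(n-m)$ plus non-crossing regions inserted into the $kn-km+1$ gaps, then assemble via the symbolic method. The only cosmetic difference is that you fill every gap with a possibly-empty non-crossing diagram to get $T(x,y)^{kn-km+1}$ directly, whereas the paper sums over the number $p$ of nonempty regions via $\sum_{p}\binom{kn-km+1}{p}\left(T(x,y)-1\right)^p$ and applies the binomial theorem---an identical computation; your extra care in verifying the bijection (contiguity of non-crossing blocks, preservation of the non-crossing property under deletion and insertion) is detail the paper leaves implicit.
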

\begin{proof}
  We consider a general linear $k$-chord diagram on the path of length
  $kn$. Let there be $p$ disjoint regions, each consisting solely of
  non-crossing $k$-chords. When we remove these regions, we are left
  with a path of length $kn-km$ where $m$ is the number of
  non-crossing $k$-chords. There are thus ${kn-km+1\choose p}$
  distinct ways of placing the $p$ regions. We need to sum over all
  possible sizes of these regions, and also over all possible
  distributions of the $\ell$ short chords amongst the $p$
  regions. These sums take place automatically using the symbolic
  method by adding a factor of $(T(x,y)-1)^p$, where we subtract $1$
  because the regions cannot be empty. Finally, we multiply by
  $d_{n-m,0}$, because the configuration on the path of length $kn-km$
  is necessarily one with no short chords. We therefore have that
  \begin{equation}\nonumber
    d_{n,\ell,m}=d_{n-m,0}\sum_{p}{kn-km+1\choose p} [x^my^\ell]
    \left(T(x,y)-1\right)^p
    = [x^my^\ell]\,T(x,y)^{kn-km+1}\,d_{n-m,0}.
     \end{equation}
\end{proof}
\begin{table}[H]
\begin{center}
  \begin{tabular}{c|lllllll}
  $n$ \textbackslash$\ell$& 1& 2& 3& 4& 5& 6& 7\\
    \hline  
1& 1\\
2& 2& 1\\
3& 4& 7& 1\\
4& 8& 30& 16& 1\\
5& 16& 104& 122& 30& 1\\
6& 32& 320& 660& 365& 50& 1\\
7& 64& 912& 2920& 2875& 903& 77& 1\\
\end{tabular}
\end{center}
\caption{The number $T_{n,\ell}$ of non-crossing linear $k$-chord
  diagrams with exactly $\ell$ short chords for the case $k=3$. OEIS
  sequence \seqnum{A091320}; the $k=4$ case is \seqnum{A334062} and
  the $k=5$ case is \seqnum{A334063}.}
\end{table}

\subsection{Asymptotic distribution of short chords}

In this section we consider the asymptotic distribution of short chords
amongst non-crossing configurations, i.e.,
$$
\lim_{n\to\infty}\frac{T_{n,\ell}}{T_n}.
$$ It has been established that the Narayana numbers (i.e., the $k=2$
case) are asymptotically normally distributed, cf.\ \cite{FR}, with
mean $\mu = n/2$ and variance $\sigma^2 = n/8$. In this section we
appeal to the methods of Flajolet and Noy~\cite[Theorem 5]{FN} to
establish the following generalization.
\begin{theorem}
  The numbers $T_{n,\ell}$ of non-crossing linear $k$-chord diagrams
  of length $kn$, with exactly $\ell$ short chords, are asymptotically
  normally distributed with mean $\mu$ and variance $\sigma^2$ given
  by
   \begin{equation}\nonumber
     \mu = \left(\frac{k-1}{k}\right)^{k-1}n,\qquad
     \sigma^2 = \left(\frac{k-1}{k}\right)^{2k}\frac{k}{(k-1)^2}
     \left( 1-2k+(k-1)\left(\frac{k}{k-1}\right)^k\right)n.
     \end{equation}
\end{theorem}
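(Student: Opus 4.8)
The plan is to read the statement off the algebraic functional equation for $T(x,y)$ supplied by Corollary~\ref{CorT}, namely $T-1 = xT^k - x(1-y)T$, and to feed it into the Flajolet--Noy machinery for algebraic singularities cited in the theorem. First I would invert the equation, writing $x = \psi(T,y)$ with
\begin{equation}\nonumber
\psi(w,y) = \frac{w-1}{w^k - (1-y)\,w},
\end{equation}
so that $w = T(x,y)$ is the algebraic function defined by the polynomial $\Phi(x,y,w) = w - 1 - xw^k + x(1-y)w$. With $y$ treated as a parameter marking short chords, the object of interest is the probability generating function $[x^n]T(x,y)/[x^n]T(x,1)$, and the goal is to show it satisfies the quasi-powers regime of \cite[Theorem 5]{FN}.

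The next step is to locate the dominant singularity. For $y$ near $1$ the relevant singularity $x=\rho(y)$ is the movable branch point where $\partial_w\psi(w,y)=0$; writing $\tau(y)$ for that root I set $\rho(y)=\psi(\tau(y),y)$. At $y=1$ the equation collapses to the Fuss--Catalan relation $x=(w-1)/w^k$, whose branch point is $\tau(1)=k/(k-1)$, giving $\rho(1) = (k-1)^{k-1}/k^k$ and confirming a genuine square-root singularity. I would then verify the hypotheses of \cite[Theorem 5]{FN}: analyticity of $\psi$ and its $w$-derivatives near the branch point, the nondegeneracy $\partial_{ww}\psi(\tau(1),1)\neq 0$ ensuring the square-root nature of the singularity persists analytically for $y$ in a neighborhood of $1$, aperiodicity (the numbers $T_m=\frac{1}{(k-1)m+1}\binom{km}{m}$ are positive for every $m$), and finally the variability condition $\sigma^2>0$.

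Granting these, the quasi-powers/transfer theorem gives a Gaussian limit law with
\begin{equation}\nonumber
\mu = -\frac{\rho'(1)}{\rho(1)}\,n,\qquad
\sigma^2 = \left(-\frac{\rho''(1)}{\rho(1)} + \left(\frac{\rho'(1)}{\rho(1)}\right)^2 - \frac{\rho'(1)}{\rho(1)}\right)n,
\end{equation}
so the remaining task is to evaluate $\rho'(1)$ and $\rho''(1)$. Because $\partial_w\psi$ vanishes identically along the branch, the envelope (implicit-differentiation) identities
\begin{equation}\nonumber
\rho'(y) = \partial_y\psi(\tau(y),y),\qquad
\tau'(y) = -\frac{\partial_{wy}\psi}{\partial_{ww}\psi},\qquad
\rho''(y) = \partial_{wy}\psi\,\tau'(y) + \partial_{yy}\psi
\end{equation}
reduce everything to partial derivatives of $\psi$ at $(\tau(1),1)$. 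Evaluating at $\tau(1)=k/(k-1)$ yields $\rho'(1)/\rho(1) = -\bigl(\tfrac{k-1}{k}\bigr)^{k-1}$, hence the stated mean, and then the second-order terms collapse, after simplification using $\tau(1)-1 = 1/(k-1)$, to the claimed expression for $\sigma^2$.

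The hard part is not the algebra but the analytic bookkeeping that justifies invoking \cite[Theorem 5]{FN}: one must confirm that for complex $y$ near $1$ the point $\tau(y)$ remains the \emph{unique} dominant singularity, of square-root type, with $\rho(y)$ analytic and no coalescence or exchange of singularities, which is precisely the smooth-branch-point schema that their theorem packages. Verifying the variability condition $\sigma^2>0$ (equivalently that the computed coefficient of $n$ is strictly positive for all $k\geq 2$) is the other point requiring care; the evaluation of the second partial derivatives of $\psi$ at the branch point is then routine.
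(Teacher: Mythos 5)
Your proposal is correct and follows essentially the same route as the paper: both apply Flajolet--Noy's bivariate machinery (their Theorem 5) to the functional equation of Corollary~\ref{CorT}, locate the movable branch point $\tau(y)$ near $y=1$ (your condition $\partial_w\psi(\tau(y),y)=0$ is exactly the paper's characteristic equation $\phi(\tau)-\tau\phi'(\tau)=0$ after the shift $w=1+u$), and extract $\mu$ and $\sigma^2$ from the same formulas involving $\rho'(1)$ and $\rho''(1)$. The only cosmetic difference is computational: you reduce $\rho'(1)$, $\rho''(1)$ to partial derivatives of $\psi$ via envelope identities, while the paper expands $\tau(y)$ perturbatively in powers of $(y-1)$ and then expands $\rho(y)=\tau(y)/\phi(\tau(y))$; both are routine and yield the stated constants.
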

\begin{proof}
The methods used by Flajolet and Noy~\cite[Theorem 5]{FN} extend the
analytic combinatorics of implicitly defined generating functions, as
treated in Flajolet and Sedgewick~\cite[Section VI.7]{FS}, to the case
of bivariate generating functions. In the univariate case
(i.e., setting $y=1$ in $T(x,y)$), it is straightforward to establish
an asymptotic expansion
\begin{equation}\nonumber
T(x,1) = d_0 + d_1\sqrt{1-x/\rho} + {\cal O}(1-x/\rho),
 \end{equation}
which then implies the asymptotic growth
\begin{equation}\nonumber
[x^n]T(x,1)\sim \gamma \frac{\rho^{-n}}{\sqrt{\pi n^3}} \left(1+{\cal
  O}(n^{-1})\right), \quad \rho = \frac{(k-1)^{k-1}}{k^k},\quad
\gamma = \sqrt{\frac{k}{2(k-1)^3}}.
 \end{equation}
The method of Flajolet and Noy is to extend this to the bivariate case
by considering $y$ as a parameter. We begin by expressing the recurrence
relation of Theorem~\ref{Thmnc} as follows:
 \begin{equation}\nonumber
    T(x,y)=x\,\phi\left(T(x,y)\right)
   \Rightarrow \phi(u)=(1+u)^k-(1-y)(1+u).
  \end{equation}
We are then tasked with solving the so-called \emph{characteristic equation}
   \begin{equation}\nonumber
     \phi(\tau(y))-\tau(y) \phi'(\tau(y))=0,
   \end{equation}
which in our case is
   \begin{equation}\nonumber
     (1+\tau)^k-(1-y)(1+\tau) -\tau \left( k(1+\tau)^{k-1} -(1-y) \right) =0.
   \end{equation}
Solving this equation in an expansion of $\tau(y)$ about $y=1$, we
find
   \begin{equation}\nonumber
     \tau(y) = \frac{1}{k-1}+
     \frac{k}{(k-1)^2}\left(\frac{k-1}{k}\right)^k(y-1)
     -\frac{k}{(k-1)^2} \left(\frac{k-1}{k}\right)^{2k}(y-1)^2
     +{\cal O}\left((y-1)^3\right),
   \end{equation}
which further implies an expansion for 
   \begin{equation}\nonumber
     \rho(y) = \frac{\tau(y)}{\phi\left(\tau(y)\right)}.
   \end{equation}
   Flajolet and Noy establish that this implies the following asymptotic growth
   \begin{equation}\nonumber
[x^n]T(x,y) = \gamma(y) \,\rho(y)^{-n} \left(1+{\cal O}(n^{-1/2})\right),
   \end{equation}
   where $\gamma(y)$ is an analytic function of $y$. The asymptotic
   normality is then established through results due to Bender and
   Richmond~\cite{BR}. The probability generating function for the
   distribution is given by
    \begin{equation}\nonumber
      \frac{ [x^n]T(x,y)}{[x^n]T(x,1)} = \frac{\gamma(y)}{\gamma(1)}
      \left( \frac{\rho(y)}{\rho(1)}  \right)^{-n} \left(1+{\cal O}(n^{-1/2})\right).
    \end{equation}
    The mean and variance are computed in the usual way
   \begin{equation}\nonumber
     \mu = -\frac{\rho'(1)}{\rho(1)}, \qquad
     \sigma^2 =  -\frac{\rho''(1)}{\rho(1)} -\frac{\rho'(1)}{\rho(1)}
     + \left(\frac{\rho'(1)}{\rho(1)}\right)^2,
   \end{equation}
   and this results in
   \begin{equation}\nonumber
     \mu = \left(\frac{k-1}{k}\right)^{k-1}n,\qquad
     \sigma^2 = \left(\frac{k-1}{k}\right)^{2k}\frac{k}{(k-1)^2}
     \left( 1-2k+(k-1)\left(\frac{k}{k-1}\right)^k\right)n.
     \end{equation}
\end{proof}

\begin{corollary}
The mean $\mu$ and variance $\sigma^2$ are given by the following
expressions in the $k\to\infty$ limit
\begin{equation}\nonumber
  \mu = \frac{n}{e},\qquad
  \sigma^2 = \frac{e-2}{e^2}\,n.
  \end{equation}
\end{corollary}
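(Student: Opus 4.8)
The plan is to directly evaluate the $k\to\infty$ limits of the exact formulas for $\mu$ and $\sigma^2$ established in the preceding theorem; no new structural input is needed, only careful use of the standard limit $\left(1+x/k\right)^{k}\to e^{x}$. For the mean this is immediate: writing $\left(\tfrac{k-1}{k}\right)^{k-1}=\left(1-\tfrac1k\right)^{k-1}$ and factoring as $\left(1-\tfrac1k\right)^{k}\cdot\left(1-\tfrac1k\right)^{-1}$, the first factor tends to $e^{-1}$ and the second to $1$, so $\mu/n\to e^{-1}$, i.e. $\mu=n/e$.

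The variance is the only genuinely delicate case, because it presents a $0\times\infty$ indeterminate form: the prefactor $\tfrac{k}{(k-1)^2}$ decays like $1/k$, while the bracket $\bigl(1-2k+(k-1)(k/(k-1))^k\bigr)$ grows linearly in $k$ (since $(k/(k-1))^k\to e$). First I would distribute $\tfrac{k}{(k-1)^2}$ across the bracket to resolve this form. The key algebraic observation, which I expect to be the crux of the argument, is that the middle term collapses cleanly:
\[
\frac{k}{(k-1)^2}\,(k-1)\left(\frac{k}{k-1}\right)^{k}=\left(\frac{k}{k-1}\right)^{k+1}=\left(1+\tfrac{1}{k-1}\right)^{k+1}\longrightarrow e .
\]

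Next I would treat the remaining piece, $\tfrac{k(1-2k)}{(k-1)^2}$, by dividing numerator and denominator by $k^2$ to get $\tfrac{1/k-2}{(1-1/k)^2}\to-2$. Combining the two pieces shows that the product of the bracket with the prefactor tends to $e-2$. Finally I would multiply by the leftover exponential factor $\left(\tfrac{k-1}{k}\right)^{2k}=\left(1-\tfrac1k\right)^{2k}\to e^{-2}$, yielding $\sigma^2/n\to e^{-2}(e-2)=\tfrac{e-2}{e^{2}}$, as claimed. The whole computation is elementary once the indeterminate form is recognized; the only thing requiring care is rewriting the middle term as $\left(\tfrac{k}{k-1}\right)^{k+1}$ rather than naively expanding $(k/(k-1))^k$ to higher order, which turns the calculation into three routine applications of the exponential limit.
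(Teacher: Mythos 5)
Your proposal is correct and matches the paper's (implicit) approach: the corollary is stated in the paper without any proof precisely because it is the routine limit evaluation you carry out, resting on $\left(1-\tfrac1k\right)^k\to e^{-1}$. In particular, your resolution of the $0\times\infty$ indeterminate form via $\tfrac{k}{(k-1)^2}\,(k-1)\left(\tfrac{k}{k-1}\right)^{k}=\left(\tfrac{k}{k-1}\right)^{k+1}\to e$, combined with $\tfrac{k(1-2k)}{(k-1)^2}\to-2$ and the prefactor $\left(1-\tfrac1k\right)^{2k}\to e^{-2}$, correctly yields $\sigma^2/n\to e^{-1}-2e^{-2}=(e-2)/e^{2}$, in agreement with the stated result.
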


\section{Applications to generalized games of memory}

In the game of memory, $n$ distinct pairs of cards are placed in an
array. The present author~\cite{DY1,DY2} has enumerated configurations
for $2\times n$ rectangular arrays in which exactly $\ell$ of the
pairs are found side-by-side, or over top of one another, thus forming
$1\times 2$ or $2\times 1$ dominoes. The enumeration of these
configurations always carries a factor of $n!$, which counts the
orderings of the $n$ distinguishable pairs. It is therefore easier to
drop this factor, and thus treat the pairs as indistinguishable. For
the case of $1 \times 2n$ arrays the configuration of the cards is
then in one-to-one correspondence with linear chord diagrams, where
the dominoes correspond to short chords.

More generally, the array which the cards are placed on can be
specified by a graph $G$ on $2n$ vertices, representing the positions
of the cards, and such that an edge of $G$ indicates adjacency in the
array --- for example, a rectangular array would be specified by a grid
graph of the same dimension. A matched pair of cards occupying two
adjacent vertices of $G$ then constitutes a domino. The present
author~\cite{DY1} obtained the mean for the distribution of dominoes
for general arrays was obtained in terms of the number of edges of the
graph $G$.

The results of the present paper lend themselves to a generalized game
of memory in which $n$ distinct sets of $k$ matched cards are placed
in an array specified by a graph $G$ on $kn$ vertices. The notion of a
domino then generalizes to that of a polyomino, see
Figure~\ref{Figk3n3example}. The results of the previous sections may
be interpreted as pertaining to this generalized game, where the array
is a single row of length $kn$, and so $G$ is the path on $kn$
vertices.
\begin{definition} 
  A \emph{polyomino} is \emph{a set of $k$ matching cards which occupy a
    connected subgraph of $G$ with $k$ vertices}.
\end{definition}
\begin{figure}[H]
\begin{center}
\includegraphics[bb=0 0 74 74, height=0.75in]{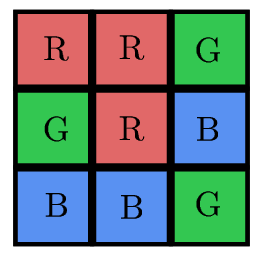}
\end{center}
\caption{A configuration with a single polyomino (the red cards marked ``R'') in a
  generalized game of memory with $k=3$ and $n=3$, played on a square
  array.}
\label{Figk3n3example}
\end{figure}
We may employ a slightly more general version of
Theorem~\ref{Thmmean} to compute the mean number of polyominoes.
\begin{theorem}\label{Thmmeanp}
The mean number of polyominoes in a generalized game of memory
played on a graph with $kn$ vertices and $r$ connected
subgraphs, each with $k$ vertices, is given by
\begin{equation}\nonumber
{kn \choose k}^{-1}nr .
\end{equation}
\end{theorem}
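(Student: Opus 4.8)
The plan is to mirror the proof of Theorem~\ref{Thmmean} via linearity of expectation, replacing the $kn-(k-1)$ consecutive windows of the path by the $r$ connected $k$-vertex subgraphs of $G$. First I would set up the probability space: because the $n$ matched sets of cards are treated as indistinguishable, a configuration is simply a partition of the $kn$ vertices of $G$ into $n$ blocks of size $k$, of which there are $\mathcal{N}_{k,n}$ by Proposition~\ref{Nnk}, taken uniformly at random. The $r$ connected subgraphs are enumerated by $r$ distinct $k$-vertex subsets $S$ (each inducing a connected subgraph), and for each such $S$ I would introduce the indicator $X_S$ equal to $1$ precisely when the vertex set of $S$ is one of the blocks of the partition. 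Since a polyomino is by definition a block occupying a connected $k$-subgraph, and a block can coincide with at most one of the $S$, the number of polyominoes equals $\sum_S X_S$ with no double counting.

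Next I would compute $E(X_S)$. Imposing $X_S=1$ designates the vertex set of $S$ as one block, after which the remaining $kn-k=k(n-1)$ vertices must be partitioned into $n-1$ blocks of size $k$; by Proposition~\ref{Nnk} this can be done in $\mathcal{N}_{k,n-1}$ ways. The key observation is that this count does not depend on the choice of $S$, since partitioning the leftover vertices into $k$-sets is insensitive to the adjacency structure of $G$. Hence $E(X_S)=\mathcal{N}_{k,n-1}/\mathcal{N}_{k,n}$ for every $S$.

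Finally, linearity of expectation gives $E\bigl(\sum_S X_S\bigr)=\sum_S E(X_S)=r\,\mathcal{N}_{k,n-1}/\mathcal{N}_{k,n}$, and substituting $\mathcal{N}_{k,n}=(kn)!/((k!)^n\,n!)$ simplifies the ratio to $\mathcal{N}_{k,n-1}/\mathcal{N}_{k,n}=\binom{kn}{k}^{-1}n$, which yields the advertised mean $\binom{kn}{k}^{-1}nr$.

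The computation is entirely routine once the framework is in place, so there is no serious obstacle; the only point demanding care is the bookkeeping that makes $\sum_S X_S$ count each polyomino exactly once, i.e., that the $r$ subgraphs correspond to distinct vertex sets so that no block is tallied by two different indicators. This is exactly the generalization of the observation in Theorem~\ref{Thmmean} that the path has $kn-(k-1)$ length-$k$ windows: here that geometric count is simply replaced by the graph-theoretic quantity $r$.
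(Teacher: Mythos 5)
Your proposal is correct and follows essentially the same argument as the paper: linearity of expectation over indicator variables for the $r$ connected $k$-vertex subgraphs, with $E(X_j)={\cal N}_{k,n-1}/{\cal N}_{k,n}$ computed by fixing one polyomino and counting placements of the remaining cards via Proposition~\ref{Nnk}. The extra details you supply (the explicit uniform probability space, the no-double-counting remark, and the simplification ${\cal N}_{k,n-1}/{\cal N}_{k,n}={kn\choose k}^{-1}n$) are all implicit in the paper's proof.
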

\begin{proof}
The proof proceeds through the linearity of expectation. Let the
random variable $X_j$ take the value $1$ when the $j^{\text{th}}$
connected subgraph forms a polyomino matching and $0$ otherwise. Once a
polyomino is thusly placed, by Proposition~\ref{Nnk}, there are
${\cal N}_{k,n-1}$ ways of placing the remaining cards on the $kn-k$
remaining vertices of the graph. Thus $E(X_j) = {\cal N}_{k,n-1}/{\cal
  N}_{k,n}$. We therefore have that $E(\sum_{j=1}^r X_j) =
\sum_{j=1}^r E(X_j) = r \, {\cal N}_{k,n-1}/{\cal N}_{k,n}$.
\end{proof}
It is interesting to consider how this mean scales with $n\to\infty$
for hypercubical grid (or other regular) graphs. For fixed $k$, we
expect the number $r$ of subgraphs on $k$ vertices to scale as $n$. We
thus expect the mean to scale as $n^2/n^k=n^{2-k}$. Indeed, for large
hypercubical grid graphs of dimension $d$, the present
author~\cite[Corollary 2]{DY1} showed that the mean number of dominoes
approaches $d$ (and is hence ${\cal O}(n^0)$). For $k>2$, polyominoes
are instead {\it suppressed} as $n$ grows large.

We may also introduce the notion of a \emph{connected component} of
polyominoes, see Figure~\ref{Figconcomp}.
\begin{definition}\label{Defcomppoly}
  A \emph{connected component} is a \emph{set of polyominoes which occupy a
    single connected subgraph of $G$}.
\end{definition}
\begin{figure}[H]
\begin{center}
\includegraphics[bb=0 0 120 74, height=0.75in]{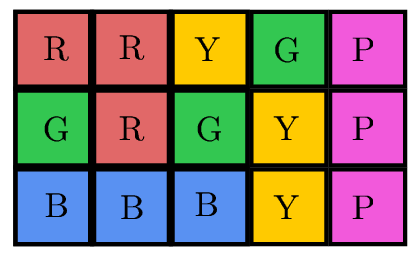}
\end{center}
\caption{A configuration in a generalized game of memory with $k=3$
  and $n=5$ with two connected components: the red ``R'' and blue
  ``B'' cards form one connected component and the pink ``P'' cards
  form another.}
\label{Figconcomp}
\end{figure}
The notion of non-crossing configurations is more subtle to generalize
beyond the case of the linear array, and a precise definition will be
left to further work. The problem of enumerating configurations in
generalized games of memory by polyominoes, connected components, and
(suitably defined) non-crossing arrangements is interesting, and
likely very difficult to obtain exact results for in general.

\bigskip
\hrule
\bigskip

\noindent {\it 2010 Mathematics Subject Classification:} 
Primary 05A15; Secondary 05C70, 60C05. 

\noindent \emph{Keywords:} 
chord diagram, perfect matching. 
\bigskip
\hrule
\bigskip

\noindent (Concerned with sequences 
\seqnum{A062993},
\seqnum{A091320},
\seqnum{A334056},
\seqnum{A334057},
\seqnum{A334058},
\seqnum{A334059},
\seqnum{A334060},
\seqnum{A334061},
\seqnum{A334062}, and
\seqnum{A334063}.)
\bigskip
\hrule
\bigskip

\vspace*{+.1in}
\noindent

\bigskip
\hrule
\bigskip


\end{document}